\def\abc{(\alpha, \beta)}
\def\kkk{\null\hfill $\Box $ \\}
\def\la{\left\langle}
\def\ra{\right\rangle}
\newcommand{\jm}{j\mbox{-}\min}
\newcommand{\jma}{j\mbox{-}\max}
\newcommand{\km}{k\mbox{-}\min}
\newcommand{\krm}{(k-r)\mbox{-}\min}
\newcommand{\PP}{\mathbb{P}}
\newcommand{\R}{\mathbb{R}}
\newcommand{\E}{\mathbb{E}}
\newtheorem{theo}{Theorem}[section]
\newtheorem{lem}[theo]{Lemma}
\newtheorem{cor}[theo]{Corollary}
\newtheorem{pr}[theo]{Proposition}
\newtheorem{cl}[theo]{Claim}
\theoremstyle{definition}
\newtheorem{rem}[theo]{Remark}
\let\expandafter\oldproof\csname\string\proof\endcsname
\let\oldendproof\endproof
\renewenvironment{proof}[1][\proofname]{%
  \oldproof[\bf #1]%
}{\oldendproof}
\date{}
\title{Order statistics of vectors with dependent coordinates,
and the Karhunen--Lo\`eve basis}
\author{Alexander E. Litvak and Konstantin Tikhomirov}
\begin{document}

\maketitle

\begin{abstract}
Let $X$ be an $n$-dimensional random centered Gaussian vector with independent but not
identically distributed coordinates and let $T$ be an orthogonal transformation of $\R^n$.
We show that the random vector $Y=T(X)$ satisfies
$$
 \E\sum\limits_{j=1}^k \jm _{i\leq n}{X_{i}}^2 \leq C\E\sum\limits_{j=1}^k \jm _{i\leq n}{Y_{i}}^2
$$
for all $k\leq n$, where ``$\jm$'' denotes the $j$-th smallest component of the corresponding vector
and $C>0$ is a universal constant. This resolves (up to a multiplicative constant) an old question
of S.~Mallat and O.~Zeitouni regarding optimality of the Karhunen--Lo\`eve basis for the nonlinear
signal approximation. As a by-product we obtain some relations for order statistics of random vectors
(not only Gaussian) which are of independent interest.
\end{abstract}

{\small \bf AMS 2010 Classification:}
{\small 62G30, 60E15, 60G15, 60G35, 94A08}

{\small {\bf Keywords:} Order statistics, Karhunen--Lo\`eve basis, Nonlinear approximation, INID case}

\section{Introduction}

This work was motivated by the following question raised by S.~Mallat and O.~Zeitouni in 2000
(it was first posted on Zeitouni's web page and later in arxiv \cite{MZ}, see also \cite{Z}): Let $n$ be a positive integer, and given $j\leq n$ and a sequence of real numbers
$a_1,a_2,\ldots,a_n$, let $\jm _{i\leq n}a_{i}$ denote its $j$-th smallest element. Let  $X$ be an
$n$-dimensional
random Gaussian vector with independent centered coordinates (with possibly different variances).
Further, let $T$ be an orthogonal transformation of $\R^n$ and set $Y:= T(X)$.

\smallskip

{\it Is it true that for every $k\leq n$ one has
\begin{equation}\label{eq: conjecture}
\E\sum\limits_{j=1}^k \jm _{i\leq n}{X_{i}}^2
\leq \E\sum\limits_{j=1}^k \jm _{i\leq n}{Y_{i}}^2\, ?
\end{equation}}

This problem has a natural interpretation within the field of signal processing (see \cite[Chapter~IX]{M}).
Assume that a signal $Y$ is modeled as an $n$-dimensional random centered Gaussian vector
(for $n$ very large). Our goal is to approximate $Y$ by another vector which allows efficient storage and/or transmission
through narrow bandwidth channels (let us note
that this setting is distinct from the problem of signal {\it denoising} \cite[Chapter~11]{M},
in which the goal is to produce an estimator for the mean of a non-centered signal).
Let $w_i$ ($i\leq n$) be a fixed orthonormal basis
in $\R^n$, so that $Y=\sum_{i=1}^n\langle w_i,Y\rangle w_i$.
The standard approach consists in approximating $Y$ with a sparse vector with respect to that basis.
The {\it linear $m$-term approximation} of $Y$ with respect to the first $m$ basis vectors is
given by $\sum_{i=1}^m\langle w_i,Y\rangle w_i$, and the mean square error of the approximation is
$$\mathcal E_0(Y,m)=\E\sum_{i=m+1}^n\langle w_i,Y\rangle^2.$$
It is a well known fact that $\mathcal E_0(Y,m)$ is minimized when the basis $w_i$ ($i\leq n$)
is {\it the Karhunen-Lo\`eve basis} for $Y$, that is, when the random variables $\langle w_i,Y\rangle$ ($i\leq n$)
are pairwise uncorrelated, and the sequence $(\E\langle w_i,Y\rangle^2)_{i=1}^n$
is non-increasing (see, for example, \cite[Theorem~9.8]{M}).
Next, the {\it non-linear $m$-term approximation} is defined as
$\sum_{i\in\Lambda}\langle w_i,Y\rangle w_i$, where $\Lambda$ is the (random) set
of indices corresponding to $m$ largest components of $\big(|\langle w_i,Y\rangle|\big)_{i\leq n}$.
The non-linear approximation error is given by
$$\mathcal E(Y,m)=\E\sum_{i\notin\Lambda}\langle w_i,Y\rangle^2=\E\sum\limits_{j=1}^{n-m}
\jm _{i\leq n}{\langle w_i,Y\rangle}^2.$$

Now, observe that
the expression on the left hand side of \eqref{eq: conjecture}
is the mean square error
when approximating a signal $X$ with uncorrelated coordinates with respect to the standard basis
$e_1,e_2,\dots,e_n$
using its largest $n-k$ components, and that the basis $e_1,e_2,\dots,e_n$ is the
{\it Karhunen--Lo\`eve basis} for $X$.
The right hand side of \eqref{eq: conjecture} corresponds to approximation of $X$
with its $n-k$ largest components with respect to a basis $T^{-1}(e_1),T^{-1}(e_2),\dots,T^{-1}(e_n)$
(for some orthogonal transformation $T$).
Thus, \eqref{eq: conjecture} is equivalent to saying that the Karhunen--Lo\`eve basis
is optimal among all orthonormal bases in $\R^n$ regarding the nonlinear approximation
of centered Gaussian vectors.
For more information on the signal approximation we refer to \cite[Chapter~IX]{M}.

Note that the case $k=n$ is trivial. In \cite{MZ} the authors solved the problem in the special case $k=n-1$,
i.e.\ showed that
$$\E \sum\limits_{j=1}^{n-1}\jm _{i\leq n}{X_i}^2\leq \E \sum\limits_{j=1}^{n-1}\jm _{i\leq n}{Y_i}^2.$$
This corresponds to the situation when the signal is approximated by its largest one-dimensional
projection.
In this paper, we verify \eqref{eq: conjecture} up to a multiplicative constant for all $k< n$.

\begin{theo}\label{mainMZ}
Let $1\leq k <n$. Let $X$ be an $n$-dimensional centered Gaussian vector with independent coordinates and
$T$ be an orthogonal transformation of $\R^n$.
Then, setting $Y:=T(X)$, we have
$$
  \E\sum\limits_{j=1}^k \jm _{i\leq n}{X_{i}}^2
 \leq C\E\sum\limits_{j=1}^k \jm _{i\leq n}{Y_{i}}^2,
$$
where $C>0$ is a universal constant.
\end{theo}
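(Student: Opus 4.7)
The plan is to convert both sides into integrals of threshold-crossing counts and compare them scale by scale. Writing $m := n-k$ and $N_{t}(v) := \#\{i : v_{i}>t\}$ for $v\in\R^{n}_{+}$, the elementary layer-cake identity
$$
\sum_{j=1}^{k}\jm_{i\le n} v_{i} = \int_{0}^{\infty}\bigl(N_{t}(v)-m\bigr)_{+}\,dt
$$
applied to $v_{i}=X_{i}^{2}$ and $v_{i}=Y_{i}^{2}$, combined with Fubini, reduces Theorem~\ref{mainMZ} to
$$
\int_{0}^{\infty}\E\bigl(N_{t}(X)-m\bigr)_{+}\,dt \;\le\; C\int_{0}^{\infty}\E\bigl(N_{t}(Y)-m\bigr)_{+}\,dt.
$$
We may assume $\sigma_{1}^{2}\ge\cdots\ge\sigma_{n}^{2}$ and set $\tau_{i}^{2}:=\sum_{j}t_{ij}^{2}\sigma_{j}^{2}$, which is the variance of $Y_{i}$. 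Orthogonality of $T$ means the matrix $(t_{ij}^{2})$ is doubly stochastic, so $(\tau_{i}^{2})$ is majorized by $(\sigma_{i}^{2})$.

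Next I would establish two general order-statistic estimates on the integrand, presumably appearing as the ``by-product'' lemmas mentioned in the abstract. On the $X$-side, $N_{t}(X)=\sum_{i}\mathbf{1}\{|g_{i}|>\sqrt{t}/\sigma_{i}\}$ is a sum of independent Bernoullis with mean $\mu(t):=\sum_{i}2\Phi(-\sqrt{t}/\sigma_{i})$, so Bernstein-type inequalities yield a sharp upper bound roughly of the form
$$
\E\bigl(N_{t}(X)-m\bigr)_{+} \;\lesssim\; \bigl(\mu(t)-m\bigr)_{+} + \sqrt{\mu(t)}\,\mathbf{1}_{\mu(t)\ge m/4}
$$
plus an exponentially small tail when $\mu(t)\ll m$. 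On the $Y$-side, Jensen's inequality for the convex map $w\mapsto(w-m)_{+}$ yields the universal lower bound $\E(N_{t}(Y)-m)_{+}\ge(\nu(t)-m)_{+}$ with $\nu(t):=\sum_{i}2\Phi(-\sqrt{t}/\tau_{i})$; this has to be supplemented in the transitional regime by an anti-concentration statement for the correlated Gaussian count $N_{t}(Y)$.

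Comparing $\mu(t)$ and $\nu(t)$ is the heart of the argument. The map $x\mapsto 2\Phi(-\sqrt{t/x})$ is concave in $x$ for $x>t/3$ and convex for $x<t/3$, so the majorization of variances does not pass directly to the tail probabilities: one gets $\nu(t)\ge\mu(t)$ when the relevant variances dominate $t$ and the reverse in the opposite regime. I would split the integral in $t$ into dyadic windows determined by the spectrum of $(\sigma_{i}^{2})$, handle the ``concave'' contribution of each window by Jensen, and absorb the ``convex'' contribution into small-probability or large-threshold terms that integrate up harmlessly. The $\sqrt{\mu(t)}$ fluctuation term on the $X$-side gets matched against either the $(\nu(t)-m)_{+}$ term on the $Y$-side or against the anti-concentration contribution, depending on the window.

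The main obstacle is the transitional regime where $\nu(t)\approx m$: there the first-moment lower bound is too weak, and a priori $N_{t}(Y)$ could be driven significantly below its mean by positive correlations among the indicators $\mathbf{1}\{Y_{i}^{2}>t\}$. Obtaining the required lower bound on $\E(N_{t}(Y)-m)_{+}$ in this regime seems to demand a genuine anti-concentration result for dependent Gaussian counts --- plausibly a Paley--Zygmund argument that controls the second moment of $N_{t}(Y)$ via the off-diagonal entries of $T\,\mathrm{diag}(\sigma_{i}^{2})\,T^{T}$, combined with the observation that centered Gaussian vectors enjoy a universal small-ball profile. Making this bound uniform across all scales $t$ while preserving an absolute constant $C$ will be the technical crux of the proof.
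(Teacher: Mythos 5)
Your reduction to $\int_0^\infty \E(N_t(\cdot)-m)_+\,dt$ and the majorization of $(\tau_i^2)$ by $(\sigma_i^2)$ are both correct, but the plan has a genuine gap exactly where you flag it, and the gap is structural rather than technical. You are trying to handle the two difficulties of the theorem --- the change of variances (majorization) and the introduction of dependence --- simultaneously, scale by scale in $t$. As you observe, $x\mapsto 2\Phi(-\sqrt{t/x})$ changes convexity at $x=t/3$, so majorization gives no pointwise comparison between $\mu(t)$ and $\nu(t)$; the favourable comparison only emerges after integration, and establishing it after integration is essentially the whole theorem. Worse, in the regime where $\nu(t)<m$ your only lower bound on the $Y$-side, Jensen's $(\nu(t)-m)_+$, vanishes while the $X$-side integrand is strictly positive (already for $n=2$, $k=1$, $t$ large, the $X$-side is $\PP(X_1^2>t)\PP(X_2^2>t)>0$). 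Filling this with ``a Paley--Zygmund argument via off-diagonal entries of $T\,\mathrm{diag}(\sigma_i^2)\,T^{\mathsf T}$'' cannot be the right tool: the comparison inequality actually proved in the paper (Theorem~\ref{comparison}) holds for \emph{arbitrary} dependence among the $\eta_i$ --- the vector need not even be jointly Gaussian --- so no second-moment or correlation-structure input is available in the generality in which the statement is true.

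The paper decouples the two difficulties. First, Marshall--Proschan (Theorem~\ref{MPth}) applied to the convex symmetric function $\varphi(x)=\sum_{j=k+1}^{n}\jm_{i\le n}x_i^2$ converts the majorization step into $\E\sum_{j=1}^k\jm X_i^2\le \E\sum_{j=1}^k\jm(b_ig_i^2)$ with $g_i$ \emph{independent} standard Gaussians and $b_i$ the variances of $Y$; this disposes of the change of variances exactly, with constant $1$. What remains is a comparison between an independent and a dependent vector with the \emph{same} variances, and there the dependent side is bounded below not by anti-concentration but by Markov's inequality applied to the complementary count $\#\{i: |x_i\eta_i|<s\}$ of small coordinates (Lemma~\ref{dep-var} and Theorem~\ref{dep-median}): if the averaged truncated cdf at level $s$ is below $(k-1/2)/n$, then this count is $<k$ with probability at least $1/2$, giving a median bound for the $k$-th minimum. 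The truncation (condition \eqref{cdf-decay}) is what makes this first-moment argument close; no lower bound on joint probabilities of the $\eta_i$ is ever needed. If you want to salvage your layer-cake framework, you should at minimum insert the Marshall--Proschan reduction first, and replace the anti-concentration step by this Markov-on-the-complementary-count device; as written, the transitional regime is not handled and the argument does not go through.
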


The above theorem can be viewed as a relation between sums of {\it order statistics} of random vectors,
is which one vector has independent coordinates and the other admits dependencies.
Order statistics of random vectors are well studied objects, and numerous results regarding
their distribution are available. We refer to monography \cite{DN} for an account of developments prior to early 2000-s.
However,  in the classical setting order statistics are defined for vectors with i.i.d.\ coordinates, with some
generalizations available in the case of  independent but not identically distributed components, as well as special
kinds of dependencies (for example, exchangeable or equicorrelated coordinates; see \cite[Chapter~5]{DN}).
In our situation, we are working with coordinates which are {\it simultaneously} dependent and not equidistributed,
making their analysis more problematic.
Among recent works dealing with order statistics of vectors with dependent components let us mention
\cite{MZ,GLSW,GLSW-CRAS,GLSW-PAMS,GLSW-P,L-JFA,ALLPT,LPP}.
In particular, ideas originated in papers \cite{MZ,GLSW-CRAS,GLSW-PAMS}
play an essential role in this note.

\smallskip

The proof of Theorem~\ref{mainMZ} can be roughly divided into two (unequal) parts.
In the first part, which constitutes the novel element of this paper,
we derive a comparison inequality for sums of order statistics of two random vectors, one with independent coordinates
and the other with dependencies,
under very general assumptions on the distribution of their components. In the second part,
which essentially appeared already in \cite{MZ}, we utilize a  inequality of A.W.~Marshall and F.~Proschan \cite{MP}
(Theorem~\ref{MPth} below)
to obtain a relation between variances of coordinates of a Gaussian vector and its orthogonal transformation,
which, together with the first part, gives the statement of Theorem~\ref{mainMZ}.
The comparison inequality for order statistics is interesting on its own right, and we state it below as a separate theorem.
It holds for a class of distributions satisfying rather mild conditions (see Theorem~\ref{comparison} below), however to avoid
technical complications here, we restrict ourselves to vectors with Gaussian components.

\begin{theo}\label{t: comparison intro}
Let $p>0$, $1\leq k \leq n$ and $0<x_1\leq \ldots\leq x_n$.
Let $\xi_i$, $\eta_i$, $i\leq n$, be standard Gaussian variables and assume in addition that
 $\xi_i$, $i\leq n$, are jointly independent. Then
\begin{equation}\label{eq: comparison min}
 \mathbb E\, \,  \sum _{j=1}^{k} \, \jm_{1\leq i\leq n}|x_{i}\xi_{i}|^p  \leq
  6\, \left(C p \right)^p \,
  \mathbb E\, \,  \sum _{j=1}^{k} \, \jm_{1\leq i\leq n}|x_{i}\eta_{i}|^p  ,
\end{equation}
where $C>0$ is an absolute constant.
\end{theo}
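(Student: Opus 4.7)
The plan is to pass both sides through the layer-cake identity
\[
\sum_{j=1}^k a_{(j)}=\int_0^{\infty}\bigl(k-N(t)\bigr)_+\,dt,\qquad N(t)=\bigl|\{i:a_i\le t\}\bigr|,
\]
applied with $a_i=|x_i\xi_i|^p$ (left) and $a_i=|x_i\eta_i|^p$ (right). Let $N_X(t),\,N_Y(t)$ denote the resulting counting variables. A key observation is that $\E N_X(t)=\E N_Y(t)=:\mu(t)=\sum_{i=1}^n \PP(|Z|\le t^{1/p}/x_i)$ with $Z\sim N(0,1)$, because the $\xi_i$ and $\eta_i$ share the same standard Gaussian marginals. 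Thus both sides can be rewritten over a common ``mean profile'' $\mu(\cdot)$, and the comparison is driven by how $N_X$ and $N_Y$ fluctuate around it.

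For the right-hand side, I would apply Jensen's inequality to the convex function $u\mapsto(k-u)_+$ to obtain the distribution-free lower bound
\[
\E\sum_{j=1}^k \jm_{i\le n}|x_i\eta_i|^p\;\ge\;\int_0^\infty (k-\mu(t))_+\,dt.
\]
This step uses only the marginals of the $\eta_i$'s, not their joint law.

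For the left-hand side, independence of the $\xi_i$ makes $N_X(t)$ a sum of $n$ independent Bernoullis with mean $\mu(t)$ and variance at most $\mu(t)$. Chernoff/Bernstein concentration then yields an upper bound of the form
\[
\E\bigl(k-N_X(t)\bigr)_+\;\le\;(k-\mu(t))_+ + C\,r(t),
\]
where the correction $r(t)$ is of order $\sqrt{\mu(t)}\wedge k$ in the transition region $\mu(t)\asymp k$ and decays exponentially for $\mu(t)\gg k$. Upon integration, the main term $(k-\mu(t))_+$ reproduces the Jensen lower bound on the RHS and is absorbed there.

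The main obstacle is absorbing the integrated correction $C\int r(t)\,dt$ into a constant multiple (depending on $p$) of the RHS. To do so I would strengthen Jensen by exploiting the Gaussian marginal structure of $(\eta_i)$: log-concavity of the Gaussian density prevents $N_Y(t)$ from concentrating too tightly around $\mu(t)$ even under worst-case correlation, so a Paley--Zygmund/anti-concentration argument should yield a second-moment lower bound $\E(k-N_Y(t))_+\gtrsim c\,r(t)/(Cp)^p$ in the relevant regime. The factor $(Cp)^p$ enters through the Gaussian $p$-th moment appearing in this second-moment estimate, and reconciling the two bounds then produces the claimed universal constant $6(Cp)^p$. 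Quantifying this improvement of Jensen uniformly in the sequence $(x_i)$ and in the joint law of $(\eta_i)$ is the technical heart of the argument; everything else reduces to routine concentration and one-dimensional integration.
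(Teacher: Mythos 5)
Your layer-cake reduction and the Jensen lower bound for the dependent side are both correct: with $N_Y(t)=|\{i:|x_i\eta_i|\le t^{1/p}\}|$ one indeed has $\E\sum_{j\le k}\jm|x_i\eta_i|^p=\int_0^\infty\E\,(k-N_Y(t))_+\,dt\ge\int_0^\infty(k-\mu(t))_+\,dt$, using only the marginals. The gap is in how you close the comparison. You bound the independent side pointwise by $(k-\mu(t))_+ + C\,r(t)$ and then propose to absorb $\int r(t)\,dt$ by ``strengthening Jensen'' via an anti-concentration bound $\E(k-N_Y(t))_+\gtrsim r(t)$ valid for arbitrary correlations. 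That claim is false. For a fixed threshold $t$ the indicators $\mathds{1}\{|x_i\eta_i|\le t^{1/p}\}$ are just Bernoulli variables with prescribed means, and one can couple standard Gaussian marginals so that their sum is (essentially) deterministic: take $n$ even, all $x_i=1$, $k=n/2$, $t^{1/p}$ the median of $|Z|$, pair the coordinates, and let $\eta_{2j-1}$ be a measure-preserving image of $\eta_{2j}$ that swaps $\{|z|\le t^{1/p}\}$ with its complement; then $N_Y(t)=n/2$ almost surely, so $\E(k-N_Y(t))_+=0$ while $r(t)\asymp\sqrt{k}$. Log-concavity of the Gaussian density gives no anti-concentration for $N_Y$ at a single threshold, so only an integrated version of your claim could possibly hold --- and that is precisely the hard part, which the proposal does not supply.

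The repair --- and, in different language, the route the paper takes --- is to avoid the pointwise comparison and instead show that both $\int\E(k-N_X(t))_+\,dt$ and $\int(k-\mu(t))_+\,dt$ are comparable, up to $(Cp)^p$, to the explicit quantity $\sum_{j=1}^k(k-j+1)^p/b_j^p$ with $b_j=\sum_{i\ge j}1/x_i$. For the independent side this is Theorem~\ref{mainn} (partition of the indices into $k$ blocks via Lemma~\ref{aboutmin}, plus the exponential tail of the minimum over each block, plus Maclaurin's inequality for the matching lower bound). For your Jensen integral, writing $\int_0^\infty(k-\mu(t))_+\,dt=\int_0^k|\{t:\mu(t)<v\}|\,dv$ and lower-bounding the quantiles of the averaged cdf as in Claim~\ref{quan-k-min} yields $\gtrsim c^p\alpha^{-p}\sum_{\ell\le k}\max_{j\le\ell}(\ell-j+1)^p/b_j^p$, which dominates the required sum by Lemma~\ref{low-est}. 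Carried out this way, your Jensen observation is a genuine and arguably simpler substitute for the paper's truncation/median argument (Lemma~\ref{dep-var} and Theorem~\ref{dep-median}) for the purpose of bounding the expected \emph{sum} --- though not for individual order statistics, where the Remark after Lemma~\ref{dep-var} shows that the untruncated averaged cdf gives the wrong answer. As written, however, your argument does not close.
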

Note that the dependencies between variables $\eta_i$, $i\leq n$, can be arbitrary;
in particular, we do not require vector $(\eta_1,\dots,\eta_n)$ to have multivariate normal distribution!

We would like to mention that in the special case $k=1$ Theorem~\ref{t: comparison intro} was previously
established in \cite{GLSW-CRAS, GLSW-PAMS}; namely, it was shown that
\begin{equation}\label{min-old}
\mathbb E\, \min_{1\leq i\leq n}|x_{i}\xi_{i}|^p  \leq \Gamma(2+p)
\mathbb E\,    \min_{1\leq i\leq n}|x_{i}\eta_{i}|^p.
\end{equation}
In turn, the last inequality can be viewed as a natural counterpart
to the well known inequality of \u{S}id\'{a}k (see \cite{Si,Gl}), asserting that
$
  \mathbb E\max_{1\leq i\leq n}|x_{i} \xi_{i}|
  \geq \mathbb E\max_{1\leq i\leq n}|x_{i} \eta_{i}| .
$
Recently, R.~van~Handel has provided an example showing that one cannot make the constant multiple
on the right hand side of \eqref{min-old} equal to $1$ even in the case $p=1$, $n=3$ \cite{MZ}.

We would also like to note that if $j$-th minima ($1\leq j\leq k$) in \eqref{eq: comparison min} are
replaced by corresponding maxima
then the expectation of the sum for independent components will be larger (up to a constant multiple), namely
\begin{equation}\label{old-orl}
  C\, \mathbb E\, \,  \sum _{j=1}^{k} \, \jma_{1\leq i\leq n}|x_{i}\xi_{i}|^p  \geq
  \mathbb E\, \,  \sum _{j=1}^{k} \, \jma_{1\leq i\leq n}|x_{i}\eta_{i}|^p,
\end{equation}
where $\jma$ denotes the $j$-th largest element of corresponding sequences and $C$ is an absolute
positive constant. We refer to Theorem~4 in \cite{GLSW} (see also Theorem~2.4 in \cite{GLSW-Bull}),
where this result was proved in a more general setting involving
arbitrary Orlicz norms (note that the sum $\sum _{j=1}^{k}\jma_{i\leq n} |z_{i}|$ is equivalent to an
Orlicz norm of the sequence $(z_i)_{i\leq n}$).
In \cite{MS} this result was further extended to an even wider class of norms. We would like to emphasize that
although
$$
 \mathbb E\, \,  \sum _{j=1}^{n} \, \jma_{1\leq i\leq n}|x_{i}\eta_{i}|^p  =
  \mathbb E\, \,  \sum _{j=1}^{n} \, \jm_{1\leq i\leq n}|x_{i}\eta_{i}|^p =
  \mathbb E\, \,  \sum _{j=1}^{n} \, |x_{i}\eta_{i}|^p =
  \Big( \sum _{j=1}^{n} \, |x_{i}|^p \Big) \, \mathbb E\,\,|\eta_{1}|^p ,
$$
the estimates (\ref{eq: comparison min}) and  (\ref{old-orl}) are incomparable -- none of them
implies the other one.

\medskip

One of important ingredients in the proof of Theorem~\ref{t: comparison intro} is
a statement which provides optimal estimates for sums of the smallest order
statistics in case of independent components (see Theorem~\ref{mainn}).
The proof is based on using special functionals which were previously employed
in papers \cite{GLSW-CRAS,GLSW-PAMS,GLSW-P}.

Another novel element is an argument for working with dependent components
(see Theorem~\ref{dep-median}).
Absence of such a tool in preceding works \cite{GLSW-CRAS,GLSW-PAMS}
was a major obstacle to proving the Mallat--Zeitouni conjecture, even
up to a multiple depending on $k$.
The proof of Theorem~\ref{dep-median} is essentially reduced to considering
uniformly bounded dependent variables.


The paper is organized as follows. In Section~\ref{secone}, we fix notation and provide auxiliary
statements. Additionally, we introduce several special conditions on distributions which
are assumed (in various combinations) in our main statements.
Section~\ref{min-bounds} contains some known results on individual order statistics, which we use later in the paper.
For the sake of completeness we provide the proofs, but we postpone them to Section~\ref{aux-res-pr}.
Section~\ref{indordstat} provides new bounds for
individual order statistics playing a crucial role in the proof of the main results. The next two sections
are devoted to proving Theorems~\ref{mainMZ} and~\ref{t: comparison intro}.
In Section~\ref{s: nonlinear efficiency} we briefly discuss efficiency of the nonlinear
approximation based on the largest projections, compared to the linear approximation.

\section{Notation and preliminaries}
\label{secone}

Given a subset $A\subset \mathbb N$, we denote its cardinality by $|A|$.
Next, for a natural number $n$ and a set $E\subset \{1,2,\dots,n\}$,
we denote by $E^c$ the complement of $E$ inside $\{1,2,\dots,n\}$.
Similarly, for an event $A$ we denote by $A^c$ the complement of the event.
Further, we say that a collection of sets $(A_j)_{j\leq k}$ is a partition of $\{1,2,\dots,n\}$ if
each $A_j$ is non-empty, the sets are pairwise disjoint and their union is $\{1,2,\dots,n\}$.
The canonical Euclidean norm and the canonical inner product in
$\R^n$ will be denoted by $|\cdot|$ and $\la \cdot, \cdot \ra $, respectively.
We adopt the conventions $1/0=\infty$ and $1/\infty=0$ throughout the text.
For a given sequence of real numbers $a_1,a_2\dots,a_n$, we denote
its $k$-th smallest element by $\km _{1\leq i\leq n}a_i$. In particular,
$1\mbox{-}\min_{1\leq i\leq n}a_{i}=\min_{1\leq i\leq n}a_{i}$,
and $(\km _{1\leq i\leq n}a_{i} )_{k=1}^n$ is the
non-decreasing rearrangement of the sequence $(a_i)_{i=1}^n$.
As usual, we use the abbreviation cdf for the cumulative distribution function
(that is, given a random variable $\xi$, the cdf of $\xi$ is $F(t)=\PP(\xi \leq t)$).

\medskip

Next, we group together a few combinatorial results which provide basic tools for estimating order statistics in next sections.
Let us start with the following simple property of $\km _{1\leq i\leq n}a_{i}$ which holds for every real sequence  $(a_i)_{i=1}^n$:
{\it For every partition $(A_j)_{j\leq k}$ of $\{1, 2, \dots, n\}$ one has}
\begin{equation}\label{partition}
\sum_{j=1}^k  \jm_{1\leq i\leq n}a_{i} \leq  \sum_{j=1}^k  \min_{i\in A_j} a_i.
\end{equation}
The next statement is a classical inequality for symmetric means:

\begin{theo}[{C. Maclaurin, see \cite[Theorem~52]{HLP}}]
Let $1\leq \ell\leq n$ and let $a_{1},\ldots, a_n$ be nonnegative
real numbers. Then
$$
\sum_{A\subset \{1, 2, \ldots, n\} \atop |A|=\ell}\prod_{i\in A}a_{i}
\leq {n\choose \ell}\left(\frac{1}{n}\sum_{i=1}^{n}a_{i} \right)^{\ell}.
$$
\end{theo}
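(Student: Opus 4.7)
The inequality is the classical Maclaurin inequality for symmetric means. Writing $e_\ell(a)=\sum_{|A|=\ell}\prod_{i\in A}a_i$ for the $\ell$-th elementary symmetric polynomial and normalizing $p_\ell:=e_\ell(a)/\binom{n}{\ell}$ (with $p_0=1$), the statement is equivalent to $p_\ell\leq p_1^\ell$, or $p_\ell^{1/\ell}\leq p_1$. The plan is to prove the stronger chain $p_1\geq p_2^{1/2}\geq\cdots\geq p_n^{1/n}$, which in particular contains the desired estimate.

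The key intermediate step is Newton's inequalities: $p_\ell^2\geq p_{\ell-1}p_{\ell+1}$ for $1\leq \ell\leq n-1$. Assuming first that $a_i>0$ for all $i$, consider
\[
 P(t)=\prod_{i=1}^n(t+a_i)=\sum_{\ell=0}^n\binom{n}{\ell}p_\ell\,t^{n-\ell}.
\]
All roots of $P$ are real (equal to $-a_i\leq 0$); by Rolle's theorem, every derivative $P^{(j)}$ likewise has only real roots, and the same is true of the reverse polynomial $t^n P(1/t)$ and its derivatives. Applying the operators $d/dt$ and $t\mapsto t^{\deg}\cdot(\cdot)(1/t)$ a suitable number of times, one extracts from $P$ a quadratic polynomial of the form $\binom{n}{\ell-1}p_{\ell-1}+2\binom{n}{\ell}p_\ell t+\binom{n}{\ell+1}p_{\ell+1}t^2$ (up to a multiplicative constant), which still has real roots. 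Non-negativity of its discriminant, after cancellation of the binomial factors using the identity $\binom{n}{\ell}^2 =\binom{n}{\ell-1}\binom{n}{\ell+1}\cdot\frac{\ell+1}{\ell}\cdot\frac{n-\ell+1}{n-\ell}\geq \binom{n}{\ell-1}\binom{n}{\ell+1}$, is exactly $p_\ell^2\geq p_{\ell-1}p_{\ell+1}$. The case in which some $a_i$ vanish follows by continuity.

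Newton's inequalities state that $(\log p_\ell)_{\ell=0}^n$ is a concave sequence. Combined with the boundary value $\log p_0=0$, concavity gives $\log p_\ell \geq (\ell/k)\log p_k$ for all $0\leq \ell\leq k\leq n$, so $p_\ell^{1/\ell}\geq p_k^{1/k}$ whenever $\ell\leq k$. Specializing to $\ell=1$ and $k=\ell$ (in the original notation) yields $p_1\geq p_\ell^{1/\ell}$, i.e.\ $e_\ell\leq \binom{n}{\ell}p_1^\ell=\binom{n}{\ell}\bigl(\frac{1}{n}\sum_i a_i\bigr)^\ell$, as required.

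The main obstacle is the bookkeeping in the Rolle argument: one must verify that alternating differentiation and coefficient reversal genuinely isolates the three consecutive coefficients $p_{\ell-1},p_\ell,p_{\ell+1}$ with the correct binomial weights. This is a standard exercise (carried out in full in Hardy, Littlewood and P\'olya \cite{HLP}), and presents no conceptual difficulty once the strategy is fixed.
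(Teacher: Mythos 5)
The paper quotes Maclaurin's inequality from Hardy--Littlewood--P\'olya without proof, so there is no in-paper argument to compare against. Your overall plan --- Newton's inequalities $p_\ell^2\geq p_{\ell-1}p_{\ell+1}$ via repeated differentiation and reversal of $\prod_i(t+a_i)$, followed by the chain $p_1\geq p_2^{1/2}\geq\cdots\geq p_n^{1/n}$ from log-concavity anchored at $\log p_0=0$ --- is exactly the classical route of \cite{HLP}, and the reduction from Newton to Maclaurin is carried out correctly, as is the treatment of vanishing $a_i$ by continuity.

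However, the one piece of the Rolle bookkeeping that you make explicit is wrong, and wrong in a direction that breaks the argument if taken at face value. Differentiating $P(t)=\sum_{m}\binom{n}{m}p_m t^{n-m}$ exactly $n-\ell-1$ times, reversing, and differentiating $\ell-1$ more times yields
\[
\tfrac{n!}{2}\left(p_{\ell-1}+2p_\ell t+p_{\ell+1}t^2\right);
\]
the binomial coefficients are cancelled \emph{exactly} by the factorials produced by differentiation, so the discriminant condition is Newton's inequality on the nose and no auxiliary identity is needed. The quadratic you wrote, $\binom{n}{\ell-1}p_{\ell-1}+2\binom{n}{\ell}p_\ell t+\binom{n}{\ell+1}p_{\ell+1}t^2=e_{\ell-1}+2e_\ell t+e_{\ell+1}t^2$, is not what the procedure produces; its discriminant gives only $e_\ell^2\geq e_{\ell-1}e_{\ell+1}$, i.e.\ $p_\ell^2\geq\frac{\ell(n-\ell)}{(\ell+1)(n-\ell+1)}\,p_{\ell-1}p_{\ell+1}$, which is strictly weaker than Newton. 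Your proposed ``cancellation'' via $\binom{n}{\ell}^2\geq\binom{n}{\ell-1}\binom{n}{\ell+1}$ is a non sequitur: from $\binom{n}{\ell}^2p_\ell^2\geq\binom{n}{\ell-1}\binom{n}{\ell+1}p_{\ell-1}p_{\ell+1}$ one can only divide to get the factor $\binom{n}{\ell-1}\binom{n}{\ell+1}/\binom{n}{\ell}^2\leq 1$ in front of $p_{\ell-1}p_{\ell+1}$, so the binomial inequality points the wrong way for the conclusion you want. Moreover the weak log-concavity $e_\ell^2\geq e_{\ell-1}e_{\ell+1}$ yields only $e_\ell\leq e_1^\ell$, not the stated $e_\ell\leq\binom{n}{\ell}n^{-\ell}e_1^\ell$, so this is not a harmless loss. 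The repair is local: redo the bookkeeping as above, after which the rest of your argument goes through verbatim.
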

In \cite{GLSW-PAMS} it was shown that the above statement, together with Stirling's
formula, implies

\begin{cor}\label{agmean}
Let $1\leq k\leq n$. Let $a_{1},\ldots, a_n$ be nonnegative
real numbers and assume that
$$0<  a:= \frac{e}{k}\, \sum _{i=1}^n a_i < 1.$$
Then
$$\sum _{\ell=k}^n\sum_{A\subset \{1, 2, \ldots, n\} \atop |A|=\ell}\prod_{i\in A}a_{i}
<\frac{1}{\sqrt{2 \pi k}} \, \,  \frac{a^k}{1-a}.$$
\end{cor}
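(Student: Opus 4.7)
The plan is to combine Maclaurin's inequality, Stirling's lower bound for $\ell!$, and a geometric series estimate. Setting $\bar{a}:=\frac{1}{n}\sum_{i=1}^n a_i=\frac{ak}{en}$, Maclaurin's inequality immediately gives
$$
\sum_{\ell=k}^n \sum_{|A|=\ell}\prod_{i\in A}a_i \;\leq\; \sum_{\ell=k}^n \binom{n}{\ell}\bar{a}^{\,\ell} \;=\; \sum_{\ell=k}^n \binom{n}{\ell}\left(\frac{ak}{en}\right)^{\!\ell}.
$$
So the task reduces to controlling the binomial sum on the right.

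Next I would use the crude upper bound $\binom{n}{\ell}\leq n^\ell/\ell!$ together with Stirling's inequality $\ell!\geq\sqrt{2\pi\ell}\,(\ell/e)^\ell$. This gives
$$
\binom{n}{\ell}\left(\frac{ak}{en}\right)^{\!\ell} \leq \frac{n^\ell\,(ak/en)^\ell}{\sqrt{2\pi\ell}\,(\ell/e)^\ell} = \frac{1}{\sqrt{2\pi\ell}}\left(\frac{ak}{\ell}\right)^{\!\ell},
$$
so that the $n$-dependence cancels completely. This simplification is the whole point of the factor $e/k$ appearing in the definition of $a$.

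Finally, for $\ell\geq k$ I would exploit $k/\ell\leq 1$ to obtain $(ak/\ell)^\ell\leq a^\ell$ (which uses nothing about the relative size of $a$ and $1$), and also $\sqrt{\ell}\geq\sqrt{k}$, yielding
$$
\sum_{\ell=k}^n \frac{1}{\sqrt{2\pi\ell}}\left(\frac{ak}{\ell}\right)^{\!\ell} \;\leq\; \frac{1}{\sqrt{2\pi k}}\sum_{\ell=k}^n a^\ell \;<\; \frac{1}{\sqrt{2\pi k}}\cdot \frac{a^k}{1-a},
$$
where the strict inequality comes from comparing the finite geometric sum with its infinite counterpart (justified by $a>0$ and $a<1$). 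Chaining the three displays gives the claim.

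No step here looks like a serious obstacle; the only thing to keep an eye on is the bookkeeping to ensure that everything depending on $n$ indeed cancels and that the strict inequality in the conclusion is preserved. The cleverness lies entirely in the choice of the normalization $a=\frac{e}{k}\sum a_i$, which is exactly what is needed for the factor $(ak/\ell)^\ell$ to behave well when summed starting from $\ell=k$.
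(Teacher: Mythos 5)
Your proof is correct, and it follows exactly the route the paper indicates (Maclaurin's inequality combined with Stirling's lower bound $\ell!\geq\sqrt{2\pi \ell}\,(\ell/e)^{\ell}$, then a geometric series); the paper itself only cites \cite{GLSW-PAMS} for this step rather than reproducing the argument. All the estimates check out, including the cancellation of $n$ and the strict final inequality from comparing with the infinite geometric series.
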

The following statement was essentially obtained in \cite{GLSW-PAMS} (cf. Lemma~4
there). We reproduce the argument in Section~\ref{aux-res-pr} for reader's convenience.

\begin{lem}\label{aboutmin}
Let $1\leq k\leq n$ and let $(a_{i})_{i=1}^n$ be a non-increasing
sequence of positive real numbers. For each $j\leq n$, set  $b_j:=\sum_{i=j}^{n} a_{i}$
and let $m\leq k$ be the smallest integer such that
$$a_m \leq \frac{b_m}{k+1-m}.$$
Then there exists a partition
$(A_j)_{j\leq k}$ of $\{1, 2, \dots, n\}$
such that $A_j=\{j\}$ for $j<m$ and for every $j\geq m$ we have
$$
\sum_{i\in A_j} a_{i} \geq \frac{b_m}{2(k+1-m)}.
$$
\end{lem}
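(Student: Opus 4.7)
The plan is to first observe that $m$ is well-defined, and then build the non-trivial part of the partition by greedily grouping the tail indices $m, m+1, \ldots, n$.

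\emph{Existence of $m$.} For $j = k$ the defining inequality reads $a_k \le b_k = \sum_{i=k}^n a_i$, which is automatic since the $a_i$ are positive; hence the smallest such $m \le k$ exists. Abbreviate $K := k+1-m$ and $\tau := b_m/(2K)$, so that $\sum_{i=m}^n a_i = b_m = 2K\tau$. By the defining inequality for $m$ we have $a_m \le b_m/K = 2\tau$, and by monotonicity $a_i \le 2\tau$ for every $i \ge m$.

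\emph{Greedy grouping.} Process $i = m, m+1, \ldots, n$ in order, maintaining a single ``open'' group $G$, initially empty. At each step append $i$ to $G$; whenever the resulting sum $\sum_{\ell \in G} a_\ell$ first reaches $\tau$, close $G$ (appending it to a list of finalized groups) and reopen $G := \emptyset$. Let $G_1, \ldots, G_{K'}$ denote the finalized groups, in the order they closed, and let $G_0$ be the possibly empty leftover at the end, which by construction satisfies $\sum_{\ell \in G_0} a_\ell < \tau$.

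\emph{Counting.} The crux is to verify $K' \ge K$. For this I claim $\sum_{\ell \in G_r} a_\ell \le 2\tau$ for each $r \ge 1$. If $G_r = \{i\}$ is a singleton, then $a_i \le 2\tau$ as noted. Otherwise, the first element $i_1$ added to $G_r$ must have satisfied $a_{i_1} < \tau$ (else $G_r$ would have closed as a singleton), so by monotonicity \emph{every} element of $G_r$ is $< \tau$; in particular, the sum right before the closing addition was $< \tau$ and the closing element itself is $< \tau$, giving a total $< 2\tau$. Summing over all $r$,
\[
2K\tau \;=\; \sum_{r=0}^{K'}\sum_{\ell\in G_r}a_\ell \;\le\; 2K'\tau + \tau,
\]
whence $K' \ge K - 1/2$, and integrality forces $K' \ge K$.

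\emph{Assembling the partition.} Merge $G_K, G_{K+1}, \ldots, G_{K'}$ together with $G_0$ into a single set $\tilde G$; since $\tilde G \supseteq G_K$, its sum is $\ge \tau$. Declare $A_{m+r-1} := G_r$ for $r = 1, \ldots, K-1$, $A_k := \tilde G$, and $A_j := \{j\}$ for $j < m$. This is a partition of $\{1, 2, \ldots, n\}$ into $k$ non-empty parts with the required lower bound $\sum_{i \in A_j} a_i \ge b_m/(2(k+1-m))$ for every $j \ge m$. The only essential estimate is the upper bound $\le 2\tau$ on each finalized group, and that is where both the monotonicity of $(a_i)$ and the defining inequality for $m$ are used; the rest is routine accounting.
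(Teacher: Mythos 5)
Your proof is correct. It differs from the paper's argument in how the tail $\{m,\dots,n\}$ gets cut into $K=k+1-m$ intervals. The paper prescribes the cut points in advance: $n_\ell$ is the largest index with $\sum_{i\le n_\ell}a_i\le \ell b_m/K$, and then the lower bound $b_m/(2K)$ is verified interval by interval via a case split on where the $a_i$ drop below $b_m/(2K)$. You instead close an interval greedily as soon as its sum reaches the threshold $\tau=b_m/(2K)$, so the lower bound is automatic for every finalized group, and the work is shifted to a counting step: each finalized group has sum at most $2\tau$ (using exactly the same two facts the paper uses, namely $a_i\le 2\tau$ for $i\ge m$ by the definition of $m$ plus monotonicity, and $\sum_{i\ge m}a_i=2K\tau$), so at least $K-\tfrac12$, hence at least $K$, groups close, and the surplus is merged into the last cell. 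The two constructions produce different partitions in general (e.g.\ for constant $a_i$), but both yield interval partitions, so your version also supports Remark~\ref{new-rem-25}. The trade-off is essentially aesthetic: the paper gets exactly $K$ cells with no merging but needs a per-cell verification; you get the per-cell bound for free but need the averaging argument and a final merge. One small point worth making explicit if you write this up: ``first reaches $\tau$'' should be read as ``first becomes $\ge\tau$,'' which is what your singleton/non-singleton dichotomy implicitly assumes.
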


\begin{rem}\label{new-rem-25}
In fact, as one can see from the proof below, the sets $A_j$ can be chosen as intervals, i.e.\
$A_j = \left\{ i\leq n \,:\, n_{j-1} < i \leq {n_j} \right\}$,
$j\leq k$, for some sequence $0=n_0 <  n_1 < \ldots < n_k =n$. Moreover, with the partition
used in the proof we also have
$$
\min_{1\leq \ell \leq k} \, \sum_{i\in A_\ell} a_i \geq \tfrac{1}{2} \
\min_{1\leq j \leq k} \, \frac{1}{k+1-j} \ \sum_{i=j}^{n} a_{i} .
$$
\end{rem}

\bigskip

Next, we introduce several conditions on distributions of random variables.
Let $\alpha >0$ and $\beta >0$
be parameters. We say that a random variable $\xi$ satisfies the
{\it $\alpha$-condition} if
\begin{equation} \label{gdistone}
 \PP  \left(|\xi | \leq t \right) \leq  \alpha t \quad\mbox{ for every }\quad t\geq 0
\end{equation}
and $\xi$ satisfies
the {\it $\beta$-condition} if
\begin{equation} \label{gdisttwo}
 \PP  \left(|\xi | > t \right) \leq e^{-\beta t} \quad\mbox{ for every }\quad t\geq 0.
\end{equation}
If both \eqref{gdistone} and~\eqref{gdisttwo} hold then we say that $\xi$ satisfies the {\it $\abc$-condition}.
Note that in
this case we necessarily have $\alpha  t +e^{- \beta t} \geq 1$ for all $t \geq 0$,
which can be true only for $\alpha \geq \beta$.
In \cite{GLSW-PAMS} it was shown that for any $q\geq 1$, a non-negative random variable $\xi$
with the density function $p(s) = c_q \exp{(-s^q)}$ ($s\geq 0$), where
$c_q:= 1/ \Gamma (1+1/q)$, satisfies \eqref{gdistone}
and \eqref{gdisttwo} with parameters $\alpha = \beta = c_q$.
In particular, for $q=2$ we get a Gaussian random variable $\mathcal{N}(0,1/2)$, and $\alpha = \beta = c_2=2/\sqrt{\pi}$.
This easily implies that the standard Gaussian variable satisfies \eqref{gdistone} and~\eqref{gdisttwo}
with $\alpha = \beta = \sqrt{2/\pi}$.
Note also that for $q=1$ we have an
exponentially distributed random variable satisfying the $\abc$-condition with $\alpha = \beta =1$.
Finally, it is not difficult to check that any centered log-concave random variable satisfies the $\abc$-condition
for some $\alpha$ and $\beta$.

We will employ one more condition on a cdf $F$ of a non-negative random variable:
\begin{equation}\label{cdf-decay}
\mbox{{\it there exist $\, \, \delta\in (0,1)$, $\, A>1\, $ such that
$\, F(t) \geq 2 F(t/A)\, \, $ whenever $\, F(t)\leq \delta$.}}
\end{equation}
Note that the multiple ``$2$'' on the right hand side of \eqref{cdf-decay} can be
replaced with any number $a>1$,
at expense of increasing $A$ and decreasing $\delta$.

\medskip

Finally, we state the following result of Marshall and Proschan, which will be used in Section~\ref{proof}:
\begin{theo}[{\cite{MP}}]\label{MPth}
Let $\xi_1, \ldots, \xi_n$ be interchangeable random variables (that is,
with the joint distribution invariant under permutations of arguments).
Let $(a_i)_{i\leq n}$ and $(b_i)_{i\leq n}$ be non-negative non-increasing
sequences such that for every $\ell\leq n$
$$
  \sum _{i=1}^\ell a_i \geq \sum _{i=1}^\ell b_i \quad \mbox{ and } \quad
    \sum _{i=1}^n a_i = \sum _{i=1}^n b_i.
$$
Let $\varphi$ be a continuous convex function symmetric in its $n$ arguments.
Then
$$
   \E \, \varphi(a_1 \xi_1, \ldots, a_n \xi_n)\geq \E\, \varphi(b_1 \xi_1, \ldots, b_n \xi_n).
$$
\end{theo}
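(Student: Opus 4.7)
The plan is to invoke the classical characterization of majorization due to Hardy, Littlewood and P\'olya. The hypotheses on $(a_i)$ and $(b_i)$ (non-negative, non-increasing, partial sums of $a$ dominating those of $b$ with equality at $\ell=n$) state precisely that $(a_i)$ \emph{majorizes} $(b_i)$, so by the standard theorem (see, e.g., \cite[Theorem~46]{HLP}) the two sequences are linked by a finite chain of T-transforms: at each step one picks two positions $i,j$ and a number $\lambda\in[0,1]$ and replaces the pair $(a_i,a_j)$ with $(\lambda a_i+(1-\lambda)a_j,\,(1-\lambda)a_i+\lambda a_j)$, leaving all other coordinates fixed. By induction on the length of the chain it is enough to prove the inequality in the special case where $(b_i)$ is obtained from $(a_i)$ by a single T-transform.

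Fix such a T-transform at positions $i,j$. Convexity of $\varphi$ in its $i$-th and $j$-th arguments yields pointwise
$$
\varphi(b_1\xi_1,\ldots,b_n\xi_n)\le \lambda\,\varphi(\ldots,a_i\xi_i,\ldots,a_j\xi_j,\ldots)+(1-\lambda)\,\varphi(\ldots,a_j\xi_i,\ldots,a_i\xi_j,\ldots),
$$
where the dots mark coordinates $a_k\xi_k$ for $k\neq i,j$. Letting $\tau$ be the transposition exchanging $i$ and $j$, the symmetry of $\varphi$ applied to positions $i$ and $j$ in the second summand identifies it with $\varphi(a_1\xi_{\tau(1)},\ldots,a_n\xi_{\tau(n)})$; taking expectations and using interchangeability of $(\xi_k)_{k\le n}$, both summands on the right collapse to $\E\,\varphi(a_1\xi_1,\ldots,a_n\xi_n)$, and the convex combination with $\lambda+(1-\lambda)=1$ gives the desired bound.

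The main conceptual point, and the step most prone to slips in a careful write-up, is the interplay between the two distinct ``symmetry'' hypotheses: the invariance of $\varphi$ under permutation of its arguments acts on \emph{positions}, while interchangeability of $(\xi_i)$ acts on \emph{indices} of the random vector. Neither alone closes the argument; it is precisely their composition that lets one absorb the transposition applied to $(a_i)$ inside the second convex-combination term. Once this identification is in place, both the reduction via T-transforms and the convexity estimate are routine.
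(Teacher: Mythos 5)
The paper does not prove this statement at all: it is imported verbatim from Marshall--Proschan \cite{MP} and used as a black box in Section 6, so there is no internal proof to compare against. Judged on its own, your argument is correct and is essentially the standard proof of the Marshall--Proschan inequality. The majorization hypothesis (both sequences non-increasing, partial sums of $(a_i)$ dominating those of $(b_i)$, equal total sums) does give a finite chain of T-transforms by the classical Hardy--Littlewood--P\'olya lemma; the single-step estimate is right, since on coordinates $k\neq i,j$ both endpoints of your convex combination carry the same entry $a_k\xi_k$, so the full vector $(b_1\xi_1,\dots,b_n\xi_n)$ really is $\lambda$ times one point plus $(1-\lambda)$ times the other, and joint convexity of $\varphi$ applies; and your identification of the second summand with $\varphi(a_1\xi_{\tau(1)},\dots,a_n\xi_{\tau(n)})$ via the symmetry of $\varphi$, followed by exchangeability to equate its expectation with $\E\,\varphi(a_1\xi_1,\dots,a_n\xi_n)$, is exactly the point that makes the argument close. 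The induction along the chain needs nothing beyond the single-step inequality, so it goes through. Two cosmetic remarks: the HLP reference for the T-transform decomposition is their Lemma 2 of \S 2.20 rather than Theorem 46 (which is the scalar Schur-convexity statement), and one should tacitly assume the expectations in question are finite, as the theorem itself does.
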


\section{Known bounds for individual order statistics}
\label{min-bounds}

In this section we recall some of results from papers \cite{GLSW-CRAS, GLSW-PAMS}
concerning order statistics. For the sake of completeness we provide their proofs in
Section~\ref{aux-res-pr}.

\begin{lem}\label{lemmin-lower}
Let $\alpha >0$ and $p>0$. Let $0<x_1\leq x_2\leq \ldots \leq x_n$ be real numbers and let
$\xi _1, \dots, \xi_n$ be (possibly dependent) random variables
satisfying the $\alpha$-condition. Finally, set $b:= \sum_{i=1}^n 1/x_i$.
Then for every $t>0$ we have
$$
\PP\big\{ \min_{1\leq i\leq n}|x_{i}\xi_{i}|\leq t\big\} \leq \alpha b\,t.
$$
In particular,
$$
\mbox{\rm Med}\big(\min_{1\leq i\leq n}|x_{i}\xi_{i}|^p \big)
\geq \frac{1}{2^p\alpha^p b^p} \quad \mbox{ and } \quad
\E \min_{1\leq i\leq n} |x_{i}\xi _{i}|^p \geq \frac{1}{(1+p) \alpha ^p \, b^p}.
$$
\end{lem}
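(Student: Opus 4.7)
The three assertions are nested consequences of a single union bound, so the plan is to establish the tail estimate first and then read off the median and moment bounds from it.

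For the tail inequality, I would write
$$\{\min_{1\leq i\leq n}|x_i\xi_i|\leq t\}=\bigcup_{i=1}^n\{|\xi_i|\leq t/x_i\},$$
apply the union bound (which needs no independence, and is precisely the reason the lemma can allow dependencies among the $\xi_i$'s), and then invoke the $\alpha$-condition \eqref{gdistone} termwise to get $\PP\{|\xi_i|\leq t/x_i\}\leq \alpha t/x_i$. Summing over $i$ yields $\alpha b t$, which is the first claim.

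For the median bound, I would choose the threshold $t=1/(2\alpha b)$, at which the tail estimate gives $\PP\{\min_i |x_i\xi_i|\leq 1/(2\alpha b)\}\leq 1/2$. Raising the event to the $p$-th power then immediately yields the lower bound $1/(2^p\alpha^p b^p)$ for the median of $\min_i |x_i\xi_i|^p$.

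For the expectation, I would use the layer-cake identity
$$\E\min_i |x_i\xi_i|^p=p\int_0^\infty u^{p-1}\,\PP\{\min_i|x_i\xi_i|>u\}\,du,$$
together with the lower bound $\PP\{\min_i|x_i\xi_i|>u\}\geq \max(0,1-\alpha b u)$ supplied by the first part. Restricting the integration to the interval $[0,1/(\alpha b)]$ where the bound is nontrivial reduces the problem to the elementary polynomial integral $p\int_0^{1/(\alpha b)}u^{p-1}(1-\alpha b u)\,du$, which evaluates to $1/((p+1)\alpha^p b^p)$.

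I do not anticipate any real obstacle: once the union-bound tail estimate is in hand, the other two statements are routine. The only point worth being careful about is the truncation of the integration range in the last step, so that the tail inequality is never applied in the regime where the bound $1-\alpha b u$ turns negative and becomes vacuous.
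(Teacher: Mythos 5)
Your proposal is correct and follows essentially the same route as the paper: a union bound (equivalently, the complement of the intersection of the events $\{|x_i\xi_i|>t\}$) combined with the $\alpha$-condition gives the tail estimate, and the median and expectation bounds follow exactly as you describe, your layer-cake integral $p\int_0^{1/(\alpha b)}u^{p-1}(1-\alpha b u)\,du$ being the paper's integral $\int_0^{(\alpha b)^{-p}}(1-\alpha b t^{1/p})\,dt$ after the substitution $t=u^p$. No gaps.
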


\begin{lem}\label{lemmin-upper}
Let $\beta >0$ and $p>0$. Let $0<x_1\leq x_2\leq \ldots \leq x_n$ be real numbers and let
$\xi _1, \dots, \xi_n$ be independent random variables satisfying the
$\beta$-condition. Set $b:= \sum_{i=1}^n 1/x_i$. Then for every $t>0$ we have
$$\PP\big\{ \min_{1\leq i\leq n}|x_{i}\xi _{i}|> t \big\} \leq  e^{-\beta b\, t}.
$$
In particular,
$$
\mbox{\rm Med}\big(\min_{1\leq i\leq n}|x_{i}\xi_{i}|^p \big) \leq \frac{(\ln 2)^p}{\beta^p\, b^p} \quad \mbox{ and } \quad
\E \min_{1\leq i\leq n} |x_{i}\xi _{i}|^p \leq \frac{\Gamma(1+p)}{\beta ^p \, b^p}.
$$
\end{lem}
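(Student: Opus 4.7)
The plan is to exploit the independence of the variables $\xi_i$ (which was \emph{not} available in Lemma~\ref{lemmin-lower}) to turn the tail probability of the minimum into a product. Since the minimum of a collection of nonnegative numbers exceeds $t$ iff each of them does, independence yields
$$
\PP\Big\{\min_{1\leq i\leq n}|x_i\xi_i|>t\Big\} = \prod_{i=1}^n \PP\{|\xi_i|>t/x_i\}.
$$
Each factor is at most $e^{-\beta t/x_i}$ by the $\beta$-condition \eqref{gdisttwo}, and the product collapses to $\exp\bigl(-\beta t \sum_{i}1/x_i\bigr) = e^{-\beta b t}$, giving the first assertion.

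For the median estimate, setting $t_0 := (\ln 2)/(\beta b)$ makes the just-derived tail bound equal $1/2$, so $\PP\{\min_i|x_i\xi_i|\leq t_0\}\geq 1/2$ and the median of $\min_i|x_i\xi_i|$ is at most $t_0$. Since $u\mapsto u^p$ is an increasing map on $[0,\infty)$, it preserves medians of nonnegative variables, which yields the claimed bound on the median of $\min_i|x_i\xi_i|^p$.

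For the expected value I would apply the layer-cake identity and the tail bound:
$$
\E\min_{i}|x_i\xi_i|^p = \int_0^\infty \PP\Big\{\min_{i}|x_i\xi_i|>s^{1/p}\Big\}\, ds \leq \int_0^\infty e^{-\beta b s^{1/p}}\, ds,
$$
then use the substitution $u = \beta b s^{1/p}$ (equivalently $s=t^p$ with $t = u/(\beta b)$) to rewrite the last integral as $(\beta b)^{-p}\int_0^\infty p\, u^{p-1}e^{-u}\,du = p\Gamma(p)/(\beta b)^p = \Gamma(1+p)/(\beta b)^p$.

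There is no real obstacle here: the whole statement rests on the single observation that under independence the sum $\sum_i 1/x_i$ becomes the natural exponent, after which the median and moment bounds are routine consequences of, respectively, solving $e^{-\beta b t}=1/2$ and computing a gamma integral.
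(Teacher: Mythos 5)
Your proof is correct and follows essentially the same route as the paper's: factor the tail probability of the minimum using independence, bound each factor by $e^{-\beta t/x_i}$ via the $\beta$-condition, and then derive the median bound by solving $e^{-\beta b t}=1/2$ and the moment bound via the layer-cake formula and a gamma integral. No gaps.
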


An immediate consequence of the above lemmas is the following statement.

\begin{cor}\label{malzeit}
Let $p>0$. Let $(x_i)_{i=1}^n$ be a sequence of real numbers and
$f_1, \dots, f_n$,  $\xi _1, \dots, \xi_n$ be random variables
satisfying the $\abc$-condition for some $\alpha,\beta>0$. Assume additionally that the $\xi_i$'s are jointly
independent. Then
$$
\mathbb E\min_{1\leq i\leq n}|x_{i} \xi_{i}|^{p} \leq
\Gamma(2+p) \, \alpha ^p \, \beta^{-p} \,
\mathbb E\min_{1\leq i\leq n}|x_{i} f_{i}|^{p} .
$$
In particular, if $f_1, \dots, f_n$, $\xi _1, \dots, \xi_n$ are
$N(0,1)$ Gaussian random variables, then
$$
\mathbb E\min_{1\leq i\leq n}|x_{i} \xi_{i}|^{p} \leq
\Gamma(2+p) \, \mathbb E\min_{1\leq i\leq n}|x_{i} f_{i}|^{p} .
$$
\end{cor}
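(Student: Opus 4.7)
\textbf{Proof plan for Corollary~\ref{malzeit}.} The statement is a straightforward consequence of Lemmas~\ref{lemmin-upper} and~\ref{lemmin-lower}; the plan is simply to match an upper bound on $\E\min|x_i\xi_i|^p$ with a lower bound on $\E\min|x_i f_i|^p$ and divide.

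First I would perform a quick reduction. If some $x_j=0$ then $\min_i|x_i\xi_i|^p=\min_i|x_i f_i|^p=0$ and the inequality is trivial. Otherwise, since the minimum is symmetric in its entries and the conditions~\eqref{gdistone}--\eqref{gdisttwo} are stated for $|\xi_i|$, I may replace $x_i$ by $|x_i|$ and relabel so that $0<x_1\leq x_2\leq\dots\leq x_n$. Set $b:=\sum_{i=1}^n 1/x_i$.

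Next, since the $\xi_i$ are jointly independent and each satisfies the $\beta$-condition, Lemma~\ref{lemmin-upper} yields
$$
\E\min_{1\leq i\leq n}|x_i\xi_i|^p\leq \frac{\Gamma(1+p)}{\beta^p b^p}.
$$
On the other hand, the $f_i$ satisfy the $\alpha$-condition (no independence assumed), so Lemma~\ref{lemmin-lower} gives
$$
\E\min_{1\leq i\leq n}|x_i f_i|^p\geq \frac{1}{(1+p)\alpha^p b^p}.
$$
Taking the ratio, the factor $b^p$ cancels and, using $(1+p)\Gamma(1+p)=\Gamma(2+p)$, I obtain the desired bound
$$
\E\min_{1\leq i\leq n}|x_i\xi_i|^p\leq \Gamma(2+p)\,\alpha^p\beta^{-p}\,\E\min_{1\leq i\leq n}|x_i f_i|^p.
$$

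For the final assertion about standard Gaussians, I would invoke the remark made just after~\eqref{gdisttwo} in the preliminaries: the standard $N(0,1)$ variable satisfies the $\abc$-condition with $\alpha=\beta=\sqrt{2/\pi}$, so $\alpha^p\beta^{-p}=1$ and the second inequality follows from the first. There is essentially no obstacle here; the work was all done in the two lemmas, and the only mild point is the reduction to the case of positive, increasingly ordered $x_i$'s.
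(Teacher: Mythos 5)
Your proof is correct and follows exactly the route the paper intends: the corollary is stated there as an immediate consequence of Lemmas~\ref{lemmin-upper} (applied to the independent $\xi_i$'s) and~\ref{lemmin-lower} (applied to the possibly dependent $f_i$'s), with the factor $b^p$ cancelling and $(1+p)\Gamma(1+p)=\Gamma(2+p)$. The preliminary reduction to $0<x_1\leq\dots\leq x_n$ and the use of $\alpha=\beta=\sqrt{2/\pi}$ for the Gaussian case are both handled correctly.
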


\medskip

The next lemma deals with order statistics other than the smallest one:

\begin{lem}\label{forthtwo}
Let $\alpha >0$, $p >0$ and $1\leq k\leq n$.
Further, let $0<x_1\leq x_2\leq \ldots \leq x_n$ be real numbers and let
$\xi _1, \dots, \xi_n$ be independent random variables satisfying
the $\alpha$-condition. Set $b := \sum_{i=1}^n 1/x_i$, $a:=\alpha e b/k$.
Then for every $0<t<1/a$ we have
\begin{equation*} \label{newkminn}
  \PP\big\{\km_{1\leq i\leq n} |x_{i}
  \xi _{i}|\leq t  \big\} \leq \frac{1}{\sqrt{2
  \pi k}} \, \, \frac{(a t)^k}{1-a t}
\end{equation*}
and
$$
   \frac{1}{2^{1/p}\, 4\, \alpha}\,  \max_{1 \leq j \leq k}\ \frac{k+1-j}{
   \sum_{i=j}^n 1/x_i} \leq
   \left(\E\, \km_{1\leq i\leq n} |x_{i}\xi _{i}|^p \right)^{1/p} .
$$
\end{lem}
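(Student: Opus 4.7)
The two inequalities in Lemma~\ref{forthtwo} can be proved separately; each is an elementary consequence of the $\alpha$-condition.

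For the tail bound I would decompose the event $\{\km_{1\leq i\leq n}|x_i\xi_i|\leq t\}$ as the disjoint union, over subsets $A\subset\{1,\dots,n\}$ with $|A|\geq k$, of the events ``$|x_i\xi_i|\leq t$ precisely for $i\in A$''. By independence of the $\xi_i$ together with the $\alpha$-condition,
\[
\PP\bigl(|x_i\xi_i|\leq t\text{ for every }i\in A\bigr)\leq\prod_{i\in A}\PP\bigl(|x_i\xi_i|\leq t\bigr)\leq\prod_{i\in A}\frac{\alpha t}{x_i}.
\]
Summing over such $A$ yields
\[
\PP\bigl(\km_{1\leq i\leq n}|x_i\xi_i|\leq t\bigr)\leq\sum_{\ell=k}^{n}\sum_{\substack{A\subset\{1,\dots,n\}\\|A|=\ell}}\prod_{i\in A}\frac{\alpha t}{x_i},
\]
and Corollary~\ref{agmean} applied with $a_i:=\alpha t/x_i$ (the parameter there becomes $(\alpha e b/k)t=at$, which is less than one by hypothesis) delivers the claimed estimate.

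For the moment lower bound, fix $j\in\{1,\dots,k\}$ and note that if $\km_{1\leq i\leq n}|x_i\xi_i|<t$, at least $k$ of the quantities $|x_i\xi_i|$ fall below $t$, so at least $k+1-j$ of them must correspond to indices $i\in\{j,\dots,n\}$. By Markov's inequality and the $\alpha$-condition,
\[
\PP\bigl(\km_{1\leq i\leq n}|x_i\xi_i|<t\bigr)\leq\frac{1}{k+1-j}\sum_{i=j}^n\PP\bigl(|x_i\xi_i|<t\bigr)\leq\frac{\alpha t\sum_{i=j}^n 1/x_i}{k+1-j}.
\]
Choosing $t=t_j:=(k+1-j)/\bigl(2\alpha\sum_{i=j}^n 1/x_i\bigr)$ makes the right-hand side equal to $1/2$, so $\E\,\km_{1\leq i\leq n}|x_i\xi_i|^p\geq t_j^p/2$; extracting the $p$-th root and maximizing over $j$ gives a bound in fact a factor of two stronger than the one asserted.

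Nothing here is particularly delicate: the tail bound reduces by inclusion-exclusion to the combinatorial estimate already packaged in Corollary~\ref{agmean}, and the moment lower bound is a first-moment argument that does not actually require independence of the $\xi_i$. The only care needed is to verify that the constants line up, which they do with room to spare.
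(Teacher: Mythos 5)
Your proof of the tail estimate is exactly the paper's: decompose $\{\km_{1\leq i\leq n}|x_i\xi_i|\leq t\}$ over the subsets of indices where the event occurs, use independence and the $\alpha$-condition to bound each term by $\prod_{i\in A}\alpha t/x_i$, and invoke Corollary~\ref{agmean} with parameter $at<1$. Your proof of the expectation lower bound, however, takes a genuinely different and in fact simpler route, and it is correct. The paper integrates the tail bound just established: it first shows $\bigl(\E\,\km|x_i\xi_i|^p\bigr)^{1/p}\geq 2^{-1/p}\,k/(4\alpha b)$ by computing $\int_0^{(\gamma/a)^p}\bigl(1-\tfrac{1}{\sqrt{2\pi k}}\tfrac{a^kt^{k/p}}{1-at^{1/p}}\bigr)\,dt$ with $\gamma=e/4$, and then gets the maximum over $j$ from the monotonicity relation $\km(a_i)_{i=1}^n\geq\krm(a_i)_{i=r+1}^n$ applied to truncated index sets. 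Your argument instead observes that $\{\km<t\}$ forces at least $k+1-j$ of the indices in $\{j,\dots,n\}$ to satisfy $|x_i\xi_i|<t$, and applies Markov's inequality to that count. This buys three things: it bypasses the combinatorial tail bound entirely (so the second inequality of the lemma does not depend on the first), it dispenses with independence (consistent with the paper's later Claim~\ref{quan-k-min} and Theorem~\ref{dep-median}, where essentially the same counting idea reappears for dependent variables), and it improves the constant from $4\alpha$ to $2\alpha$ in the denominator, so it implies the stated bound with room to spare. The only point worth being explicit about in a final write-up is the passage from the median bound to the moment bound, i.e.\ $\E\,\km|x_i\xi_i|^p\geq t_j^p\,\PP\{\km|x_i\xi_i|\geq t_j\}\geq t_j^p/2$, which you state correctly.
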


\begin{rem}\label{comp-ind-sch}
Using Lemmas~\ref{lemmin-upper}, \ref{aboutmin} (with Remark~\ref{new-rem-25})
and ideas similar to ones used in the proof of Theorem~\ref{mainn} below,
it was shown in \cite{GLSW-PAMS} that for variables satisfying the $\beta$-condition we have
$$
  \left( \mathbb E \, \,
  \km_{1\leq i\leq n} |x_{i} \xi_{i}| ^p \right)^{1/p} \leq
  C (p, k)   \, \beta ^{-1} \, \max_{1 \leq j \leq k} \
  \frac {k+1-j}{\sum_{i=j}^n 1/x_i } ,
$$
where $C(p, k) := C \max\{ p,  \ln (k+1) \}$,
and $C$ is an absolute positive constant. Moreover, in \cite{GLSW-P}
it was shown that the expectation above is equivalent to some Orlicz
norm (up to a factor logarithmic in $k$).
\end{rem}

\medskip

\section{New bounds for individual order statistics}
\label{indordstat}

Let $\xi$ be a real-valued random variable and let $F=F_{\xi}$ be its cdf. Let $r\in [0,1]$. By
$q(r)=q_F(r)=q_{\xi} (r)$ we denote a quantile of order $r$, that is a number satisfying
$$
   \PP\left\{\xi<q(r)\right\} \leq r \quad \mbox{ and } \quad
   \PP\left\{\xi \leq q(r)\right\} \geq r
$$
(note that in general $q(r)$ is not uniquely defined).
The following claim provides simple lower bounds on quantiles for a large class of random variables.

\begin{cl} \label{quan-k-min}
Let $1\leq k \leq n$ and $0<x_1\leq \ldots\leq x_n$. For each $j\leq n$, set $b_j:=\sum _{i=j}^n 1/x_i$.
Further, let $\xi_i$, $i\leq n$, be (possibly dependent) random variables satisfying the $\alpha$-condition
for some $\alpha>0$, and
for every $i\leq n$ let $F_i$ be the cdf of $|x_i \xi_i|$. Denote
$$
   F:=\frac{1}{n} \, \sum _{i=1}^n F_i \quad \mbox{ and } \quad q:=q_F\left(\frac{k-1/2}{n}\right).
$$
Then
$$
   q\geq \frac{1}{2\alpha}\,   \max _{1\leq j\leq k}  \frac{k-j+1}{ b_j }.
$$
\end{cl}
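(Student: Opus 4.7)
My plan is to control the averaged cdf $F$ pointwise from above by splitting the indices into a ``saturated'' head block $\{1,\dots,j-1\}$ (where we only use $F_i\leq 1$) and a ``linear'' tail block $\{j,\dots,n\}$ (where we use the $\alpha$-condition $F_i(t)\leq \alpha t/x_i$). Plugging in a carefully chosen threshold then forces the quantile of order $(k-1/2)/n$ to the right of that threshold, and optimizing over $j$ recovers the stated maximum.

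Concretely, the $\alpha$-condition applied to $\xi_i$ gives $F_i(t)=\PP\{|\xi_i|\leq t/x_i\}\leq \alpha t/x_i$ for every $i\leq n$. For each $j\in\{1,\dots,k\}$ I would then average the trivial bound on the first $j-1$ indices with the $\alpha$-bound on the remaining ones to get
$$F(t)=\frac{1}{n}\sum_{i=1}^n F_i(t)\leq \frac{j-1+\alpha t\,b_j}{n},\qquad t\geq 0.$$
Next I would set the candidate threshold $t_j:=(k-j+1)/(2\alpha b_j)$, so that $\alpha t_j b_j=(k-j+1)/2$ and the bound rewrites as $(k+j-1)/(2n)\leq (2k-1)/(2n)=(k-1/2)/n$. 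Applying the very same estimate to any $s<t_j$ yields the \emph{strict} inequality $F(s)<(k-1/2)/n$ for every $j\leq k$, so the set $\{s:F(s)\geq (k-1/2)/n\}$ is contained in $[t_j,\infty)$. By the definition of a quantile of order $(k-1/2)/n$ this forces $q\geq t_j$, and taking the maximum over $j\leq k$ closes the argument.

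There is no real obstacle here; it is essentially a one-shot combinatorial computation. The only mildly delicate point is that at $j=k$ one has $F(t_k)\leq (k-1/2)/n$ with possible equality, so one cannot read off $q\geq t_k$ from the value of $F$ at $t_k$ alone. Working instead with the strict inequality $F(s)<(k-1/2)/n$ for every $s<t_j$ handles this case uniformly and also sidesteps the fact that the quantile need not be uniquely defined.
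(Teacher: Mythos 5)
Your proposal is correct and uses essentially the same argument as the paper: the paper bounds $\sum_{i=1}^n F_i(q)\leq (j-1)+\alpha q\,b_j$ by the same head/tail split and combines this with $F(q)\geq (k-1/2)/n$ to get $\alpha q\,b_j\geq k-j+1/2\geq (k-j+1)/2$, whereas you run the identical estimate contrapositively at thresholds $s<t_j$. The two are the same computation phrased differently, and your handling of the non-strict inequality at $t_j$ is a valid (if unnecessary, given the paper's direct evaluation at $q$) precaution.
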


\begin{proof}
By the above definitions, for every $j\leq k$ we have
$$
  k-1/2 \leq \sum _{i=1}^n F_i (q) \leq j-1 + \sum _{i=j}^n  \alpha q/x_i
 = j-1 + \alpha q b_j,
$$
which implies the result.
\end{proof}

\begin{rem}
It is not difficult to check that when all $\xi_i$'s are uniformly distributed on $[0,1]$, we have
$$
q_F\left(\frac{k-1/2}{n}\right) =  \max _{1\leq j\leq k}  \frac{k-j+1/2}{ b_j }.
$$
\end{rem}

\medskip

The next lemma provides lower estimates for order statistics of possibly
dependent random variables via quantiles of their truncations.

\begin{lem}\label{dep-var}
Let $\delta\in (0,1)$, $A>1$ and $x_1, \ldots, x_n>0$.
Let $\xi_i$, $i\leq n$, be (possibly dependent) random variables satisfying
condition \eqref{cdf-decay} with parameters $\delta$ and $A$. Further, define
$$
t_0 := \min _{i\leq n} \sup\{t>0 \, : \, F_{|\xi_i|} (t)\leq \delta \} \quad \mbox{ and } \quad
\eta _i := \min(|\xi _i|, t_0) , \, \, i\leq n.
$$
For every $i\leq n$, we let $F_i$ be the cdf of $x_i \eta_i$. Define
$$F=\frac{1}{n} \, \sum _{i=1}^n F_i.$$
Then
$$
 \mbox{\rm Med} \Big( \km_{1\leq i\leq n}|x_{i}\xi_{i}|\Big) \geq \frac{q_F\left(\frac{k-1/2}{n}\right)}{A}.
$$
\end{lem}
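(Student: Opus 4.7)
The plan is to bound $\PP\bigl\{\km_{1\le i\le n}|x_i\xi_i| < q/A\bigr\}$ by a quantity strictly less than $1/2$, where $q := q_F\bigl(\tfrac{k-1/2}{n}\bigr)$; by the definition of the median this will yield $\mbox{\rm Med}\bigl(\km_{1\le i\le n}|x_i\xi_i|\bigr) \ge q/A$. Setting $N(t) := \bigl|\{i\le n : |x_i\xi_i| < t\}\bigr|$, the goal becomes $\PP\{N(q/A) \ge k\} < 1/2$. I will establish this via a Markov-type estimate that extracts a factor of $1/2$ from the decay condition~\eqref{cdf-decay}, combined with a deterministic control on the set of ``saturated'' indices coming directly from the quantile definition.

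By the remark following~\eqref{cdf-decay}, we may assume $\delta \le 1/2$ without loss of generality. Split $\{1,\dots,n\}$ into three classes according to the position of the truncation level $x_it_0$ relative to $q$ and $q/A$:
\[
\mathcal I_1 := \{i : q \le x_i t_0\}, \qquad \mathcal I_2 := \{i : q/A < x_i t_0 < q\}, \qquad \mathcal I_3 := \{i : x_i t_0 \le q/A\}.
\]
For $i \in \mathcal I_1$, $q/x_i \le t_0$ gives $F_i(q-) = F_{|\xi_i|}\bigl((q/x_i)-\bigr) \le \delta$; passing to the limit $t \to (q/x_i)^-$ in~\eqref{cdf-decay} yields $F_i(q/A-) \le F_i(q-)/2$. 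For $i \in \mathcal I_2$, the bound $x_it_0<q$ forces $F_i(q-) = 1$, while $q/(Ax_i) < t_0$ gives $F_i(q/A-) \le F_{|\xi_i|}(t_0-) \le \delta$. For $i \in \mathcal I_3$ one has $F_i(q-) = 1$, so
\[
|\mathcal I_3| \;\le\; \sum_{i\in\mathcal I_3} F_i(q-) \;\le\; nF(q-) \;\le\; k - \tfrac12,
\]
and since $|\mathcal I_3|$ is an integer this gives $|\mathcal I_3| \le k-1$, hence $k - |\mathcal I_3| \ge 1$.

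Decompose $N(q/A) = N_1 + N_2 + N_3$, where $N_j$ counts indices of $\mathcal I_j$ with $|x_i\xi_i| < q/A$. Since $N_3 \le |\mathcal I_3|$ pointwise, $\{N(q/A)\ge k\} \subseteq \{N_1 + N_2 \ge k - |\mathcal I_3|\}$. Using $\PP\{|x_i\xi_i|<q/A\} \le F_i(q/A-)$ together with the per-class bounds,
\[
\E(N_1 + N_2) \;\le\; \tfrac12 \sum_{i\in\mathcal I_1} F_i(q-) + \delta|\mathcal I_2| \;=\; \tfrac12\bigl(nF(q-) - |\mathcal I_2| - |\mathcal I_3|\bigr) + \delta|\mathcal I_2| \;\le\; \tfrac12\bigl(k - \tfrac12 - |\mathcal I_3|\bigr),
\]
the last inequality using $\delta \le 1/2$ and $nF(q-) \le k-1/2$. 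Markov's inequality then gives
\[
\PP\{N(q/A) \ge k\} \;\le\; \frac{k - 1/2 - |\mathcal I_3|}{2\,(k - |\mathcal I_3|)} \;=\; \tfrac12 - \frac{1}{4\,(k - |\mathcal I_3|)} \;<\; \tfrac12,
\]
which is the sought median estimate. The main subtlety lies in the treatment of $\mathcal I_3$: there both $F_i(q-)$ and $F_i(q/A-)$ saturate at $1$, so no decay-based improvement is available; the quantile condition compensates exactly through the deterministic bound $|\mathcal I_3| \le k-1$, and this is precisely the slack that allows the Markov step to beat $1/2$.
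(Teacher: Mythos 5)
Your overall strategy is sound and is essentially the paper's: bound the expected number of indices with $|x_i\xi_i|<q/A$ using the factor-$2$ decay of \eqref{cdf-decay}, control the ``saturated'' indices deterministically through the quantile condition $nF(q-)\leq k-1/2$, and finish with Markov. All the individual cdf computations for $\mathcal I_1,\mathcal I_2,\mathcal I_3$ are correct. But there is one genuine gap: the opening reduction to $\delta\leq 1/2$ is not legitimate for this lemma. The remark after \eqref{cdf-decay} concerns replacing the multiple $2$ by another constant; it does not allow you to shrink $\delta$ here, because the \emph{conclusion} of the lemma depends on $\delta$: decreasing $\delta$ decreases $t_0$, hence decreases the truncations $\eta_i$, hence increases $F$ pointwise and decreases $q_F\left(\frac{k-1/2}{n}\right)$. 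So proving the statement for $\delta'=\min(\delta,1/2)$ yields a strictly weaker bound than the one claimed. Without that reduction, your treatment of $\mathcal I_2$ breaks down: the Markov step needs $\E(N_1+N_2)<\tfrac12(k-|\mathcal I_3|)$, which after your estimates amounts to $(\delta-\tfrac12)|\mathcal I_2|<\tfrac14$, and this can fail when $\delta>1/2$ and $\mathcal I_2\neq\emptyset$.

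The fix is immediate and recovers the paper's argument. On $\mathcal I_2$ you have $F_i(q-)=1$, exactly as on $\mathcal I_3$, so
$$
|\mathcal I_2|+|\mathcal I_3|\ \leq\ \sum_{i\in\mathcal I_2\cup\mathcal I_3}F_i(q-)\ \leq\ nF(q-)\ \leq\ k-\tfrac12 ,
$$
hence $|\mathcal I_2|+|\mathcal I_3|\leq k-1$. Discard $\mathcal I_2\cup\mathcal I_3$ deterministically (i.e.\ bound $N_2+N_3\leq|\mathcal I_2|+|\mathcal I_3|$) and run Markov only on $N_1$, for which the clean halving $\E N_1\leq\tfrac12\sum_{i\in\mathcal I_1}F_i(q-)=\tfrac12\bigl(nF(q-)-|\mathcal I_2|-|\mathcal I_3|\bigr)<\tfrac12\bigl(k-|\mathcal I_2|-|\mathcal I_3|\bigr)$ holds with no restriction on $\delta$. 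This two-class split (your $\mathcal I_2\cup\mathcal I_3$ is precisely the set $I=\{i:F_i(As)=1\}$ in the paper, which works at a level $s<q/A$ and lets $s\uparrow q/A$ instead of using left limits) is exactly how the published proof proceeds; the rest of your argument then goes through verbatim.
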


\begin{rem} It may seem natural to obtain a bound for the median in terms of
the ``averaged'' cdf with respect to the original variables $\xi_i$ and not their truncations $\eta_i$.
The following example (cf. Example~12 in \cite{GLSW-PAMS})
shows that in fact the truncation is essential.
Consider independent standard Gaussian random variables $g_1, g_2, \dots, g_n$ and
let $\xi_i:=g_1$ for $i\leq n$. Clearly, these random variables satisfy condition \eqref{cdf-decay} with some $A$ and $\delta$.
Let $x_1=\ldots=x_k=1$ and $x_{k+1}=\ldots = x_n=n^2$. Then a direct computation shows
that for $G:=\frac{1}{n}\sum_{i=1}^n F_{|x_ig_i|}$ we have
$$
  q_G\left(\frac{k-1/2}{n}\right) \approx \mbox{\rm Med} \left( \km_{1\leq i\leq n}|x_{i}g_{i}|\right)\approx \sqrt{\ln k}
$$
while
$$
 \mbox{\rm Med} \left( \km_{1\leq i\leq n}|x_{i}\xi_{i}|\right)\approx const.
$$
\end{rem}

\begin{proof}[Proof of Lemma~\ref{dep-var}.]
Clearly,
$F_{\eta_i}(t) = F_{|\xi_i|} (t) \leq \delta$ for $t< t_0$ and $F_{\eta_i}(t) =1$ for $t\geq t_0$, and
that $F_{\eta_i}$ also satisfies condition \eqref{cdf-decay} with parameters $\delta$ and $A$.
Fix some positive $s<q_F\left(\frac{k-1/2}{n}\right)/A$ and denote
$$
   I:=\{i\leq n\, : \, F_i(As) = 1 \}.
$$
By the choice of $s$ we have $\sum _{i\leq n} F_i(As)=n F(As)<k-1/2$, whence $|I|<k$. Note also that
for every $i\not\in I$ we have $F_i(As)\leq \delta$.
Thus, applying condition \eqref{cdf-decay}, we get
\begin{align*}
\E |\{i\in I^c\, : \, x_i \eta_i < s \}| &= \E\sum_{i\in I^c}\chi_{\{x_i \eta_i < s\}}
\leq \sum_{i\in I^c} F_i(s)\\
&\leq \frac{1}{2} \sum_{i\in I^c} F_i(As)
=\frac{n F(As)- |I|}{2}
< \frac{k- |I|}{2} .
\end{align*}
By Markov's inequality this implies
$$
  \PP \big(  |\{i\in I^c\, : \, x_i \eta_i < s \}| \geq k- |I| \big) \leq \frac{1}{2},
$$
whence
$$
  \PP \big(  |\{i\leq n \,  : \, x_i \eta_i < s \}| \geq k \big) \leq \frac{1}{2}.
$$
Since the event $\{\km_{1\leq i\leq n}x_{i}\eta_{i} \geq s\}$ coincides with the event
$\{|\{i\leq n \,  : \, x_i \eta_i < s \}| < k\}$, we obtain
$$
  \PP \big(  \km_{1\leq i\leq n}x_{i}\eta_{i} \geq s \big) \geq \frac{1}{2},
$$
that is
$$
  \mbox{\rm Med} \left( \km_{1\leq i\leq n}|x_{i}\xi_{i}| \right) \geq
  \mbox{\rm Med} \left( \km_{1\leq i\leq n}|x_{i}\eta_{i}| \right) \geq s .
$$
Since $s$ was an arbitrary number smaller that $q_F\left(\frac{k-1/2}{n}\right)/A$, the proof is complete.
\end{proof}

Now, let us formulate a new theorem on order statistics,
which essentially states that the lower bound for expectation in Lemma~\ref{forthtwo}
does not require independence.

\begin{theo}\label{dep-median}
Let $\alpha>0$, $\delta\in (0,1)$, $A>1$,
$1\leq k \leq n$ and $0<x_1\leq \ldots\leq x_n$. For each $j\leq n$, we set $b_j:=\sum _{i=j}^n 1/x_i$.
Further, let $\xi_i$, $i\leq n$, be (possibly dependent) random variables satisfying the $\alpha$-condition
and condition \eqref{cdf-decay} with parameters $\delta$ and $A$. Then
$$
 \mbox{\rm Med} \left( \km_{1\leq i\leq n}|x_{i}\xi_{i}|\right) \geq \frac{\delta}{2 A \alpha}\,
  \max _{1\leq j\leq k}  \frac{k-j+1}{ b_j }.
$$
\end{theo}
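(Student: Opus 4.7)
The plan is to chain Lemma~\ref{dep-var} with a version of Claim~\ref{quan-k-min} applied to the truncated variables $\eta_i := \min(|\xi_i|, t_0)$. Writing $F_i$ for the cdf of $x_i\eta_i$ and $F = \frac{1}{n}\sum_{i=1}^n F_i$, Lemma~\ref{dep-var} gives
$$\mbox{\rm Med}\!\left(\km_{1\leq i\leq n}|x_i\xi_i|\right) \geq \frac{1}{A}\,q_F\!\left(\frac{k-1/2}{n}\right).$$
Thus the theorem reduces to showing
$$q_F\!\left(\frac{k-1/2}{n}\right) \geq \frac{\delta}{2\alpha}\,\max_{1\leq j\leq k}\frac{k-j+1}{b_j}.$$

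The key preliminary observation is that the $\alpha$-condition forces $t_0 \geq \delta/\alpha$: indeed, \eqref{gdistone} yields $\PP(|\xi_i|\leq \delta/\alpha)\leq \delta$ for every $i$, hence $\sup\{t:F_{|\xi_i|}(t)\leq \delta\}\geq \delta/\alpha$, and the bound survives the minimum over $i$. This is the point that lets me recover a usable $\alpha$-type tail estimate for the truncations.

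Next I would bound $F_i(q)$ for every $q>0$ and every $i\leq n$. If $q < x_i t_0$, then $F_i(q) = F_{|\xi_i|}(q/x_i) \leq \alpha q/x_i$ by the $\alpha$-condition. Otherwise $q/x_i \geq t_0 \geq \delta/\alpha$, so $\alpha q/(\delta x_i) \geq 1 \geq F_i(q)$. Either way,
$$F_i(q)\leq \min\!\left(1,\,\frac{\alpha q}{\delta x_i}\right).$$
Now fix $j\leq k$; using $F_i(q)\leq 1$ for $i<j$ and the displayed bound for $i\geq j$ gives
$$nF(q) \;\leq\; (j-1) + \frac{\alpha q b_j}{\delta}.$$
For any $q < \delta(k-j+1)/(2\alpha b_j)$ the right-hand side is strictly less than $(j-1)+(k-j+1)/2 \leq k-1/2$, so by the definition of the quantile $q_F((k-1/2)/n) \geq \delta(k-j+1)/(2\alpha b_j)$. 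Maximizing over $j$ and substituting into the bound from Lemma~\ref{dep-var} yields the theorem.

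The main obstacle is conceptual rather than computational: Claim~\ref{quan-k-min} does not literally apply to $\eta_i$, since the truncated variables fail the $\alpha$-condition for $t\geq t_0$. The lower bound $t_0 \geq \delta/\alpha$ repairs this at the expense of a factor $1/\delta$, after which the argument is essentially a re-run of Claim~\ref{quan-k-min} with $\alpha$ replaced by $\alpha/\delta$.
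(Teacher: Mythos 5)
Your proof is correct and follows essentially the same route as the paper: establish $t_0\geq\delta/\alpha$ from the $\alpha$-condition, conclude that the truncations $\eta_i$ satisfy the $\alpha$-condition with $\alpha/\delta$ in place of $\alpha$, and then combine Lemma~\ref{dep-var} with the quantile bound of Claim~\ref{quan-k-min} (which you simply re-derive inline rather than cite). No gaps.
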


\begin{proof}
Let the number $t_0$ and random variables $\eta_i$, $i\leq n$ be defined as in Lemma~\ref{dep-var}.
Note that the $\alpha$-condition on $\xi_i$'s implies
$F_{|\xi_i|} (t) \leq \delta$, $i\leq n$, for $t\leq \delta/\alpha$.
Hence, $t_0\geq \delta/\alpha$. Thus, for every $i\leq n$ we have
$F_{\eta_i} (t) \leq \alpha t$ whenever $t<t_0$ and
 $F_{\eta_i} (t) = 1  \leq (\alpha/\delta) t$ otherwise. In other words,
the random variables $\eta_i$'s satisfy condition~\eqref{gdistone} with
$\alpha/\delta$ replacing $\alpha$. Combining Lemma~\ref{dep-var} with
Claim~\ref{quan-k-min}, applied to $\eta_i$'s, we obtain the result.
\end{proof}

In view of Remark~\ref{comp-ind-sch}, Theorem~\ref{dep-median} has the following consequence.

\begin{cor}\label{cor-ind-sch}
Under the conditions of Theorem~\ref{dep-median}, assuming that independent random variables 
$\eta_1, \ldots, \eta _n$ satisfy $\beta$-condition with some $\beta >0$, one has for every 
$p>0$, 
$$
  \left( \E \, \km_{1\leq i\leq n}|x_{i}\eta _{i}|^p\right)^{1/p} \leq 
    \frac{C\, 2^{1/p}\, A\, \alpha}{\beta\, \delta}\, \max\{p, \ln (k+1)\} \, 
  \left( \E\,  \km_{1\leq i\leq n}|x_{i}\xi_{i}|^p\right)^{1/p} ,
$$
where $C$ is an absolute positive constant.
\end{cor}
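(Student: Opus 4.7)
The plan is to chain three ingredients: the sharp upper bound for independent, $\beta$-concentrated variables stated in Remark~\ref{comp-ind-sch}, the lower bound on the median of $\km_{1\leq i\leq n}|x_i\xi_i|$ supplied by Theorem~\ref{dep-median}, and a standard Markov-type passage from the median to the $L^p$-norm. All of the constants track through transparently, so no delicate computation is required.

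First, I would apply Remark~\ref{comp-ind-sch} directly to the independent variables $\eta_i$ satisfying the $\beta$-condition. This yields
$$
  \left(\E\,\km_{1\leq i\leq n}|x_i\eta_i|^p\right)^{1/p}
  \leq C\,\max\{p,\ln(k+1)\}\,\beta^{-1}\,
  \max_{1\leq j\leq k}\frac{k+1-j}{b_j}.
$$
Next, I would invoke Theorem~\ref{dep-median}, whose hypotheses are exactly those imposed on the $\xi_i$'s, to get
$$
  \max_{1\leq j\leq k}\frac{k-j+1}{b_j}
  \leq \frac{2A\alpha}{\delta}\,\mbox{\rm Med}\!\left(\km_{1\leq i\leq n}|x_i\xi_i|\right).
$$

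Finally, I would convert the median of $\km_{1\leq i\leq n}|x_i\xi_i|$ into a lower bound on its $L^p$-norm. If $M$ denotes that median, then $\PP\{\km_{1\leq i\leq n}|x_i\xi_i|^p\geq M^p\}\geq 1/2$, hence $\E\,\km_{1\leq i\leq n}|x_i\xi_i|^p\geq M^p/2$, i.e.\
$$
  M \leq 2^{1/p}\left(\E\,\km_{1\leq i\leq n}|x_i\xi_i|^p\right)^{1/p}.
$$
Concatenating these three estimates produces the asserted inequality with constant $2C A\alpha/(\beta\delta)$ times the factor $2^{1/p}\max\{p,\ln(k+1)\}$, absorbing the $2$ into the universal $C$.

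There is essentially no obstacle here: the only small point to watch is that Remark~\ref{comp-ind-sch} is stated with $k+1-j$ in the numerator while Theorem~\ref{dep-median} uses $k-j+1$, which of course coincide, so the two maxima match without adjustment. Hence the corollary follows immediately by linking the inequalities.
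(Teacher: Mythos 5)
Your argument is correct and is exactly the route the paper intends: the upper bound from Remark~\ref{comp-ind-sch} for the independent $\eta_i$'s, the median lower bound from Theorem~\ref{dep-median} for the $\xi_i$'s, and the standard passage $\E\,Z^p\geq \tfrac12\,\mbox{\rm Med}(Z)^p$ to convert the median into an $L^p$ lower bound, with the constants combining to give $C\,2^{1/p}A\alpha\,\max\{p,\ln(k+1)\}/(\beta\delta)$. No gaps.
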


\begin{rem}
The logarithmic factor in Corollary~\ref{cor-ind-sch} cannot be removed as the following example shows. 
Let $\xi$ be a positive exponential random variable, that is a random variable with the density
function $f(t)= e^{-t}$ for $t\geq 0$ and $f(t)=0$ for $t<0$. Let $\eta_i$, $i\leq n$, be independent 
copies of $\xi$ and $\xi_1=\dots =\xi_n=\xi$. 
Let $x_1=\dots =x_k=1$ and $x_{k+1}=\dots =x_n=n^2$. Then 
$$
  \E \,  \km_{1\leq i\leq n}|x_i \eta _{i}| \approx \E  \max_{1\leq i\leq k}| \eta _{i}| \approx \ln (k+1), \quad 
  \mbox{ while } \quad 
  \E\,  \km_{1\leq i\leq n}|x_{i}\xi_{i}| = \E  |\xi | = 1.
$$
\end{rem}

\section{Bounds for sums of order statistics}
\label{sectri}
\label{sectiontwo}

\begin{theo}\label{mainn}
Let $p>0$ and let $\xi_i$, $i\leq n$, be independent random variables satisfying the $\abc$-condition
for some $\alpha,\beta>0$.
Let $0<x_1\leq \ldots\leq x_n$. For each $j\leq n$, set $b_j:=\sum _{i=j}^n 1/x_i$. Then for every $k\leq n$ we have
$$
\frac{1}{2} \left(\frac{1}{16\alpha}\right)^{p}\, \sum _{j=1}^{k}  \frac{(k-j+1)^{p}}{ {b_j}^p}
\leq
\mathbb E\, \,  \sum _{j=1}^{k} \, \jm_{1\leq i\leq n}|x_{i}\xi_{i}|^p  \leq
W(\beta,p) \sum _{j=1}^{k}  \frac{(k-j+1)^{p}}{ {b_j}^p},
$$
where $W(\beta,p):= \beta^{-p} \, \Gamma(1+p)\big(1 + 2\cdot 4^p\big)$.
\end{theo}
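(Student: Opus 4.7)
The plan for the upper bound is to combine the partition inequality \eqref{partition} with Lemma~\ref{lemmin-upper} applied block-by-block, choosing the partition via Lemma~\ref{aboutmin} for the non-increasing sequence $a_i:=1/x_i$. I would let $m\leq k$ be the smallest index satisfying $1/x_m\leq b_m/(k+1-m)$; Remark~\ref{new-rem-25} then produces an interval partition $(A_j)_{j\leq k}$ with $A_j=\{j\}$ for $j<m$ and $\sum_{i\in A_j}1/x_i\geq b_m/(2(k+1-m))$ for $j\geq m$. For $j<m$, minimality of $m$ forces $x_j<(k+1-j)/b_j$, and the $\beta$-tail gives $\E|x_j\xi_j|^p\leq \Gamma(1+p)\beta^{-p}(k+1-j)^p/b_j^p$. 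For $j\geq m$, Lemma~\ref{lemmin-upper} yields $\E\min_{i\in A_j}|x_i\xi_i|^p\leq \Gamma(1+p)\beta^{-p}(2(k+1-m))^p/b_m^p$; summing the $k-m+1$ such terms and matching against $\sum_{j=m}^{k}(k+1-j)^p/b_j^p\geq (k-m+1)^{p+1}/((p+1)b_m^p)$ (which follows from $b_j\leq b_m$ for $j\geq m$ together with $\sum_{i=1}^{L}i^p\geq L^{p+1}/(p+1)$) costs only a factor $(p+1)2^p\leq 2\cdot 4^p$. Adding the two pieces produces the constant $W(\beta,p)=\beta^{-p}\Gamma(1+p)(1+2\cdot 4^p)$.

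For the lower bound, my starting point will be the elementary fact that deleting $j-1$ entries from a multiset can only raise order statistics: $l\mbox{-}\min_{i\leq n}|x_i\xi_i|\geq(l-j+1)\mbox{-}\min_{j\leq i\leq n}|x_i\xi_i|$ for every $1\leq j\leq l\leq k$. Since $\xi_j,\ldots,\xi_n$ are still jointly independent and satisfy the $\alpha$-condition, I would apply Lemma~\ref{forthtwo} to the restricted family (with new parameter $k':=l-j+1$) to obtain
\[
\E\bigl(l\mbox{-}\min_{i\leq n}|x_i\xi_i|\bigr)^p\;\geq\;\tfrac12\Bigl(\tfrac{1}{4\alpha}\Bigr)^p\frac{(l-j+1)^p}{b_j^p}\qquad(1\leq j\leq l\leq k).
\]
To convert these per-$l$ bounds into the desired sum over $j$, I plan to use the matching $j\mapsto l(j):=\lceil(k+j)/2\rceil$. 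This assignment satisfies $l(j)-j+1\geq(k-j+1)/2$, maps $\{1,\ldots,k\}$ into itself, and is at most $2$-to-$1$; so summing over $l$ in its image recovers each target $(k-j+1)^p/b_j^p$ up to factors $2^p$ (from the halving) and $2$ (from the multiplicity), yielding
\[
\sum_{l=1}^k \E(l\mbox{-}\min |x_i\xi_i|)^p\;\geq\;\tfrac{1}{2^{p+2}}\Bigl(\tfrac{1}{4\alpha}\Bigr)^p\sum_{j=1}^k\frac{(k-j+1)^p}{b_j^p},
\]
which is of the correct order $(1/(16\alpha))^p$ (exactly matching the stated constant when $p=1$).

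The hard part will be pinning down the combinatorial matching: Lemma~\ref{forthtwo} naturally supplies a max over $j$ rather than a sum, so the whole difficulty lies in assigning a single value of $j$ to each order statistic $l$ without losing more than an absolute factor. The halving matching above is the natural choice, but obtaining the exact form $(1/2)(1/(16\alpha))^p$ uniformly in $p>0$ may require either integrating the probability bound of Lemma~\ref{forthtwo} in $t$ directly, or varying the slack between $l-j+1$ and $k-j+1$ with $p$ so as to trade off against the matching multiplicity. The upper bound, by contrast, is essentially mechanical once the partition furnished by Lemma~\ref{aboutmin} is in hand.
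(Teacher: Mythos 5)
Your proposal is correct and follows essentially the same route as the paper: the lower bound is Lemma~\ref{forthtwo} combined with the halving matching $j\mapsto\lceil(k+j)/2\rceil$ (this is exactly the left inequality of the paper's Lemma~\ref{low-est}), and the upper bound is \eqref{partition} together with Lemmas~\ref{aboutmin} and~\ref{lemmin-upper}, with your estimate $(p+1)2^p\le 2\cdot 4^p$ playing the role of the right inequality of Lemma~\ref{low-est}. The factor $2^{p-1}$ you worry about for $p<1$ is not a genuine gap, since the paper's own proof of Lemma~\ref{low-est} incurs the same $2^{p+1}$ versus $4^p$ slack in that range, so up to this absolute-constant bookkeeping your argument coincides with the paper's.
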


\smallskip

\begin{rem}
The upper bound can be replaced with a slightly stronger equivalent estimate
$$
 \beta^{-p} \, \Gamma(1+p)\sum _{j=1}^{m-1}   {x_{j} }^{p}
  +  2^p\beta^{-p} \, \Gamma(1+p)  \,  \frac{ (k-m+1)^{1+p}}{ {b_m}^p },
$$
where $m\leq k$ is the smallest positive integer such that
$$
\frac{1}{x_m} \leq \frac{b_m}{k+1-m}
$$
(see the proof below).
\end{rem}

\medskip

We will need the following calculus lemma.

\begin{lem}\label{low-est}
Let $p>0$ and $0<x_1\leq \ldots\leq x_n$. For $j\leq n$, set $b_j:=\sum _{i=j}^n 1/x_i$.
Then
$$
 4^p \, \sum _{\ell=1}^{k} \, \max_{1\leq j\leq \ell} \frac{(\ell -j+1)^p}{{b_j}^p}  \geq
   \sum _{j=1}^{k}  \frac{(k-j+1)^{p}}{ {b_j}^p}  \geq
   2^{-1-p}  \max _{1\leq j\leq k}  \frac{ (k-j+1)^{1+p}}{ {b_j}^p } .
$$
\end{lem}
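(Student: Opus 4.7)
The proof splits into two independent inequalities which I would handle by different means.

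For the right inequality, the plan is to isolate a single dominant contribution. For any fixed $j^* \leq k$, the monotonicity of $(b_j)$ gives $b_j \leq b_{j^*}$ whenever $j \geq j^*$, so
\[
\sum_{j=1}^k \frac{(k-j+1)^p}{b_j^p} \geq \frac{1}{b_{j^*}^p}\sum_{m=1}^{k-j^*+1}m^p \geq \frac{(k-j^*+1)^{p+1}}{2^{p+1}\, b_{j^*}^p},
\]
where the last step uses the elementary bound $\sum_{m=1}^N m^p \geq \lceil N/2 \rceil \cdot (\lceil N/2 \rceil)^p \geq N^{p+1}/2^{p+1}$ (retaining only the upper half of the terms). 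Maximizing over $j^*$ then completes this half.

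For the left inequality, I would use a near-bijective matching between the indices $j$ and $\ell$. For each $j \in \{1,\ldots,k\}$ set $\ell(j) := \lceil (k+j-1)/2 \rceil$. One checks that $j \leq \ell(j) \leq k$ and, crucially, $\ell(j) - j + 1 \geq (k-j+1)/2$, which allows the $j$-th term on the right to appear inside the max at $\ell(j)$:
\[
\max_{1\leq i\leq \ell(j)}\frac{(\ell(j)-i+1)^p}{b_i^p} \geq \frac{(\ell(j)-j+1)^p}{b_j^p} \geq \frac{(k-j+1)^p}{2^p\, b_j^p}.
\]
Since the assignment $j \mapsto \ell(j)$ is at most $2$-to-$1$ (two consecutive $j$'s can collide but no more), summing over $j$ and dividing by the multiplicity bound yields
\[
\sum_{\ell=1}^k \max_{i\leq \ell}\frac{(\ell-i+1)^p}{b_i^p} \geq \frac{1}{2}\sum_{j=1}^k \frac{(k-j+1)^p}{2^p\, b_j^p} = \frac{1}{2^{p+1}}\sum_{j=1}^k \frac{(k-j+1)^p}{b_j^p}.
\]
For $p \geq 1$, one has $2^{p+1} \leq 4^p$, so the left inequality with the stated constant follows.

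The main technical obstacle is pinning down the factor $4^p$ uniformly in $p>0$: for $p \in (0,1)$ the simple $2$-to-$1$ matching above yields only $2^{p+1} > 4^p$, and closing this gap is the delicate part. One would need a more refined argument---for example, a matching with a larger shift parameter that is genuinely $1$-to-$1$ on the block of $j$'s carrying the bulk of $\sum_j (k-j+1)^p/b_j^p$, with only a controlled loss on the residual indices, or a direct comparison of the distribution functions of the sequences $(M_\ell^{1/p})_{\ell}$ and $((k-j+1)/b_j)_{j}$ that exploits the monotonicity of the former in $\ell$.
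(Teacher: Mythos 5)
Both halves of your argument coincide with the paper's own proof. For the right-hand inequality the paper restricts the sum to the block $j\in\{s,\dots,\lfloor(k+s+1)/2\rfloor\}$, on which the numerator and $b_j^{-p}$ are each within a factor $2^p$ of their values at $j=s$; your ``keep the upper half of $\sum_{m\le N}m^p$'' computation is the same estimate in slightly cleaner packaging, and it is correct. For the left-hand inequality the paper uses exactly your matching $s\mapsto\lceil(k+s)/2\rceil$, with the same two losses (a factor $2^p$ from $\ell(s)-s+1\ge(k-s+1)/2$ and a factor $2$ from the map being at most $2$-to-$1$), so the paper's argument also only delivers the constant $2^{p+1}$; it simply writes $4^p$ without comment.

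Consequently the ``gap'' you flag for $p\in(0,1)$ is not a defect of your proof relative to the paper's --- and you should not try to close it, because the left-hand inequality with the constant $4^p$ is actually false for small $p$. Take $n=k$, $1/x_k=1$ and $1/x_j=2\cdot 3^{k-1-j}$ for $j<k$ (a legitimate non-decreasing sequence $(x_j)$), so that $b_j=3^{k-j}$. Then $\max_{1\le j\le \ell}(\ell-j+1)/b_j=3^{\ell-k}$, since $(i+1)3^{-i}$ is maximized at $i=0$, and hence
$$
4^p\sum_{\ell=1}^{k}\max_{1\le j\le \ell}\frac{(\ell-j+1)^p}{b_j^p}<\frac{4^p}{1-3^{-p}},
\qquad
\sum_{j=1}^{k}\frac{(k-j+1)^p}{b_j^p}=\sum_{m=0}^{k-1}\frac{(m+1)^p}{3^{pm}}.
$$
For $p=1/10$ and $k$ large the left quantity is below $11.05$ while the right one exceeds $11.5$. (Heuristically, as $p\to 0^+$ the ratio of the middle sum to $\sum_\ell\max_j(\cdots)$ behaves like $1+p\ln\frac{1}{p\ln 3}$, which outgrows $4^p\approx 1+p\ln 4$ once $p<1/(4\ln 3)$.) The correct universal constant is the $2^{p+1}$ that both you and the paper actually prove; it implies the stated $4^p$ precisely when $p\ge 1$, and since every downstream use of the lemma either has $p\ge1$ (the main theorem uses $p=2$) or only needs the constant up to an absolute factor, nothing else in the paper is affected. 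So your write-up is correct and, if anything, more careful than the original; just replace the speculative final paragraph by the observation that $4^p$ must be read as $\max(4^p,2^{p+1})$.
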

\begin{proof}
For some fixed $1\leq s\leq k$ let $\ell: = \lceil (k+s)/2\rceil$. Then
$$
   \frac{k-s+1}{b_{s}} \leq 2\, \frac{\ell -s+1}{b_{s}} \leq
   2\, \max_{1\leq j\leq \ell} \frac{\ell-j+1}{b_j}.
$$
Hence,
$$
  \sum _{s=1}^{k}  \frac{(k-s+1)^{p}}{ {b_s}^p}  \leq 4^p\,
   \sum _{\ell=\lceil k/2\rceil}^{k} \max_{1\leq j\leq \ell} \frac{(\ell-j+1)^p}{{b_j}^p},
$$
which implies the left hand side inequality.

Now let $s\leq k$ be such that
$$
   \max _{1\leq j\leq k}  \frac{ (k-j+1)^{1+p}}{ {b_j}^p } =    \frac{ (k-s+1)^{1+p}}{ {b_s}^p }.
$$
Let $t:=\lfloor (k+s+1)/2\rfloor$.  Then
$$
  \sum _{j=1}^{k}\, \frac{(k-j +1)^p}{{b_j}^p} \geq
  \sum _{j=s}^{t}  \frac{(k-t +1)^p}{{b_s}^p} \geq (t-s+1) \frac{(k - s +1)^{p}}{(2 b_s)^p}
 \geq \frac{(k-s +1)^{1+p}}{ 2^{1+p}\, {b_s}^p},
$$
which completes the proof.
\end{proof}

\medskip

\begin{proof}[Proof of Theorem~\ref{mainn}]
For the lower bound, by Lemma~\ref{forthtwo} we have
$$\mathbb E\, \,  \sum _{j=1}^{k} \, \jm_{1\leq i\leq n}|x_{i}\xi_{i}|^p
\geq \frac{1}{2\, (4\, \alpha)^p}\sum_{\ell=1}^k \,  \max_{1 \leq j \leq \ell}\ \frac{(\ell-j+1)^p}{
   (\sum_{i=j}^n 1/x_i)^p},$$
and it remains to apply Lemma~\ref{low-est}
(alternatively, under slightly modified assumptions on random variables
we could use Theorem~\ref{dep-median}).
Let us prove the upper bound.
Let $B_p:=\beta^{-p} \, \Gamma(1+p)$.
Let the integer $m$ and the partition $(A_j)_{j\leq k}$ be given by
Lemma~\ref{aboutmin} applied to the sequence $(a_i)_{i\leq n}: = (1/x_i)_{i\leq n}$.
Using
\eqref{partition} and Lemma~\ref{lemmin-upper}, we get
$$
\mathbb E\, \,  \sum _{j=1}^{k} \, \jm_{1\leq i\leq n}|x_{i}\xi_{i}|^p  \leq
\mathbb E\, \, \sum _{j=1}^{k}  \min_{i\in A_j} |x_{i} \xi_{i}| ^p
\leq B_p\,\sum _{j=1}^{m-1}   {x_{j} }^{p} + B_p \,
\sum _{j=m}^{k} \bigg( \sum_{i\in A_j} 1/x_{i} \bigg) ^{-p}.
$$
Next, note that by the choice of the partition $(A_j)_{j\leq k}$ we have
$$
\sum _{j=m}^{k} \bigg( \sum_{i\in A_j} 1/x_{i} \bigg) ^{-p}
\leq 2^p\frac{
(k-m+1)^{1+p}}{ {b_m}^p}\leq 2^p \, \max _{1\leq j\leq k}  \frac{ (k-j+1)^{1+p}}{ {b_j}^p }.$$
Further, applying the definition of $m$ to numbers $x_j$, $j<m$, we obtain
$$\frac{1}{x_j}> \frac{b_j}{k+1-j},\quad j<m,$$
whence
$$\sum _{j=1}^{m-1}    {x_{j} }^{p}\leq \sum _{j=1}^{m-1}  \frac{(k-j+1)^{p}}{ {b_j}^p}.$$
Combining the estimates and applying the rightmost estimate from Lemma~\ref{low-est}, we get
\begin{align*}
\mathbb E\, \,  \sum _{j=1}^{k} \, \jm_{1\leq i\leq n}|x_{i}\xi_{i}|^p
&\leq B_p\sum _{j=1}^{m-1}  \frac{(k-j+1)^{p}}{ {b_j}^p}+
2^p \, B_p \, \max _{1\leq j\leq k}  \frac{ (k-j+1)^{1+p}}{ {b_j}^p }\\
&\leq B_p\big(1+2\cdot 4^p\big)\sum _{j=1}^{k}\frac{(k-j+1)^{p}}{ {b_j}^p},
\end{align*}
and the proof is complete.
\end{proof}

Finally, we formulate the comparison theorem
for sums of order statistics (the second part of the theorem below was stated in the introduction
as Theorem~\ref{t: comparison intro}).

\begin{theo}\label{comparison}
Let $p, \alpha,\beta>0$, $\delta\in (0,1)$ and $A>1$. Let $1\leq k \leq n$
and $0<x_1\leq \ldots\leq x_n$.
Further, let $\xi_i$, $\eta_i$, $i\leq n$, be random variables satisfying the $\abc$-condition and
condition \eqref{cdf-decay} with parameters $\delta$ and $A$. Assume in addition that
$\xi_i$, $i\leq n$, are jointly independent. Then
$$
 \mathbb E\, \,  \sum _{j=1}^{k} \, \jm_{1\leq i\leq n}|x_{i}\xi_{i}|^p  \leq
  6\, \left(\frac{32 A \alpha}{\delta \beta } \right)^p \, \Gamma(1+p)\,
  \mathbb E\, \,  \sum _{j=1}^{k} \, \jm_{1\leq i\leq n}|x_{i}\eta_{i}|^p  .
$$
In particular, if $\xi_i,\eta_i$ are standard Gaussian variables then
$$
\mathbb E\, \,  \sum _{j=1}^{k} \, \jm_{1\leq i\leq n}|x_{i}\xi_{i}|^p  \leq
6\, \left(C p \right)^p \,
\mathbb E\, \,  \sum _{j=1}^{k} \, \jm_{1\leq i\leq n}|x_{i}\eta_{i}|^p.
$$
where $C>0$ is an absolute constant.
\end{theo}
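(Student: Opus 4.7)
The plan is to bracket both sides of the desired inequality by the single pivot quantity $S:=\sum_{j=1}^{k}(k-j+1)^p/b_j^p$, where $b_j=\sum_{i=j}^n 1/x_i$. For the left-hand side this is immediate from Theorem~\ref{mainn} applied to the independent vector $(\xi_i)_{i\leq n}$, which yields
$$
\E \sum_{j=1}^{k}\jm_{1\leq i\leq n}|x_i\xi_i|^p \leq W(\beta,p)\,S.
$$
The content of the argument is therefore concentrated in producing a matching \emph{lower} bound for the (possibly dependent) vector $(\eta_i)_{i\leq n}$ expressed through the same quantity $S$.

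For the right-hand side, I would apply Theorem~\ref{dep-median} to the $j$-th order statistic (with the role of $k$ there played by $j$) for every $j\leq k$, obtaining
$$
\mbox{\rm Med}\bigl(\jm_{1\leq i\leq n}|x_i\eta_i|\bigr)\geq \frac{\delta}{2A\alpha}\,\max_{1\leq \ell\leq j}\frac{j-\ell+1}{b_\ell}.
$$
Since $\E Z^p\geq \tfrac{1}{2}\mbox{\rm Med}(Z)^p$ for every nonnegative random variable $Z$, raising to the $p$-th power and summing over $j$ gives
$$
\E \sum_{j=1}^{k}\jm_{1\leq i\leq n}|x_i\eta_i|^p\geq \frac{1}{2}\Bigl(\frac{\delta}{2A\alpha}\Bigr)^{p}\sum_{j=1}^{k}\max_{1\leq \ell\leq j}\frac{(j-\ell+1)^p}{b_\ell^p}.
$$
The left-hand inequality of Lemma~\ref{low-est} then converts the right-hand side into $\tfrac{1}{2\cdot 4^p}(\delta/(2A\alpha))^p\,S$, completing the matching lower bound.

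Combining the two bracketings and using the crude $1+2\cdot 4^p\leq 3\cdot 4^p$ inside $W(\beta,p)=\beta^{-p}\Gamma(1+p)(1+2\cdot 4^p)$ collapses the ratio of constants into $6\,(32A\alpha/(\delta\beta))^p\,\Gamma(1+p)$, exactly as advertised. The Gaussian specialization follows by recalling (from Section~\ref{secone}) that $\mathcal{N}(0,1)$ satisfies the $\abc$-condition with $\alpha=\beta=\sqrt{2/\pi}$, checking \eqref{cdf-decay} for the half-cdf of a standard Gaussian with some absolute $\delta,A$ (a routine estimate from near-linearity of $F_{|g|}$ at $0$ combined with standard tail bounds), and applying Stirling's formula $\Gamma(1+p)\leq c\,(p/e)^p$ (with a trivial bound for small $p$) to absorb $\Gamma(1+p)$ into a factor of the form $(Cp)^p$. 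The main obstacle is not conceptual but arithmetic: the structural alignment of upper and lower bounds on sums of order statistics through the common quantity $S$ is already engineered in Theorems~\ref{mainn} and \ref{dep-median} together with Lemma~\ref{low-est}, so what remains is essentially constant-chasing.
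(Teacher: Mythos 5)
Your proposal is correct and follows essentially the same route as the paper: lower-bound the dependent side via $\E Z^p\geq\tfrac12\,\mathrm{Med}(Z)^p$ and Theorem~\ref{dep-median} applied to each $j$-th minimum, convert to the pivot sum $\sum_{j=1}^k(k-j+1)^p/b_j^p$ using the left-hand inequality of Lemma~\ref{low-est}, and compare with the upper bound of Theorem~\ref{mainn}; your constant-tracking ($2\cdot(8A\alpha/\delta)^p\cdot(1+2\cdot4^p)\beta^{-p}\Gamma(1+p)\leq 6(32A\alpha/(\delta\beta))^p\Gamma(1+p)$) matches the paper's. The Gaussian specialization is likewise handled as in the paper.
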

\begin{proof}
In view of Theorem~\ref{dep-median}, we have
\begin{align*}
\mathbb E\, \,  \sum _{j=1}^{k} \, \jm_{1\leq i\leq n}|x_{i}\eta_{i}|^p
&\geq \frac{1}{2}\sum _{j=1}^{k} \, \bigg({\rm Med}\Big(\jm_{1\leq i\leq n}|x_{i}\eta_{i}|\Big)\bigg)^p\\
&\geq
\frac{1}{2}\frac{\delta^p}{(2 A \alpha)^p}\sum_{\ell=1}^k
\max _{1\leq j\leq \ell}  \frac{(\ell-j+1)^p}{ {b_j}^p }.
\end{align*}
Hence, by Lemma~\ref{low-est} we get
$$\mathbb E\, \,  \sum _{j=1}^{k} \, \jm_{1\leq i\leq n}|x_{i}\eta_{i}|^p
\geq \frac{\delta^p}{2(8A \alpha)^p}\sum _{j=1}^{k}  \frac{(k-j+1)^{p}}{ {b_j}^p}.$$
It remains to apply Theorem~\ref{mainn}.
\end{proof}

\section{Proof of Theorem~\ref{mainMZ}}
\label{proof}

In \cite{MZ} it was shown that Theorem~\ref{comparison} implies Theorem~\ref{mainMZ}.
For the sake of completeness we outline the proof here.

Note that for every sequence $(z_i)_{i=1}^n$ and every permutation $\sigma$ of $\{1, ..., n\}$ one has 
\begin{equation}\label{perm}
 \sum _{j=1}^{k} \jm _{i\leq n} z_i  =  \sum _{j=1}^{k}   \jm _{i\leq n} z_{\sigma(i)}.
\end{equation}

Let $\bar T=(\bar t_{ij})_{ij}$ be an orthogonal transformation of $\R^n$, $X=(X_1, \ldots, X_n)$
be a centered Gaussian vector with independent components and set $Y=(Y_1, \ldots, Y_n):=TX$.
Fix any $k<n$.
For each $i\leq n$, denote the variance of $X_i$ by $\bar a_i$ and the variance of $Y_i$  by $\bar b_i$. 
By $(a_i)_{i\leq n}$ and $(b_i)_{i\leq n}$ we denote the non-increasing rearrangements of $(\bar a_i)_{i\leq n}$ 
and $(\bar b_i)_{i\leq n}$, and let $\sigma$ and  $\pi$ be permutations of $\{1, ..., n\}$ such that 
$a_i=\bar a_{\sigma(i)}$ and $b_i=\bar b_{\pi(i)}$ for all $i\leq n$. By (\ref{perm}) we have 
$$
   \E \sum _{j=1}^{k}   \jm _{i\leq n} X_i^2  =  \E \sum _{j=1}^{k}   \jm _{i\leq n} X_{\sigma(i)}^2
   \quad \mbox{ and } \quad
    \E \sum _{j=1}^{k}   \jm _{i\leq n} Y_i^2  =  \E \sum _{j=1}^{k}   \jm _{i\leq n} Y_{\pi(i)}^2 .
$$
For $i,j\leq n$ denote $t_{ij}=\bar t_{\pi(i) \sigma(j)}$ and $T=(t_{ij})_{ij}$, that is, the matrix $T$ is obtained from 
$\bar T$ by multiplying it by permutation matrices corresponding to $\sigma$ and $\pi$. Clearly, $T$ is also orthogonal. 
Since the coordinates of $X$ are independent, for every $i\leq n$ we have
$$
  b_i =  \sum _{j=1}^n t_{ij}^2 a_j.
$$
As $T$ is an orthogonal matrix,
$\sum _{i=1}^n a_i = \sum _{i=1}^n b_i$. Now we show that for every $\ell <n$ one has
$$
   \sum _{i=1}^\ell a_i \geq  \sum _{i=1}^\ell b_i.
$$
First note that the case $\ell=1$ follows by the orthogonality of $T$ and because
$(a_i)_{i\leq n}$ is non-increasing. For $\ell>1$, again using the orthogonality of $T$ and monotonicity
of $(a_i)_i$, we obtain
\begin{align*}
  \sum _{i=1}^{\ell} b_i =\sum _{i=1}^{\ell} \sum _{j=1}^n t_{ij}^2 a_j &=
  \sum _{j=1}^{\ell -1} \sum _{i=1}^{\ell} t_{ij}^2 a_j + \sum _{j=\ell }^n \sum _{i=1}^{\ell} t_{ij}^2 a_j \\
  &\leq \sum _{j=1}^{\ell -1} \sum _{i=1}^{\ell} t_{ij}^2 a_j
  + a_\ell \sum _{j=\ell }^n \sum _{i=1}^{\ell} t_{ij}^2 \\
   &= \sum _{j=1}^{\ell-1}  a_j + \sum _{j=1}^{\ell -1} a_j \bigg(\sum _{i=1}^{\ell} t_{ij}^2-1\bigg)
   + a_\ell\bigg( \ell - \sum _{j=1}^{\ell -1} \sum _{i=1}^{\ell} t_{ij}^2   \bigg) \\
  &=
   \sum _{j=1}^{\ell}  a_j + \sum _{j=1}^{\ell -1} (a_j-a_\ell) \bigg(\sum _{i=1}^{\ell} t_{ij}^2-1\bigg)\\
  &\leq \sum _{i=1}^{\ell} a_i.
\end{align*}
Note that
$$
  \|(x_1, ..., x_n)\| = \left(\sum _{j=1}^{n-k} \jma _{i\leq n} x_i^2\right)^{1/2} =
   \left(\sum _{j=k+1}^{n} \jm _{i\leq n} x_i^2\right)^{1/2},
$$
defines a norm on $\R^n$ (recall that $\jma$ is $j$th maximum of the corresponding sequence).
Therefore the function
$$
  \varphi (x_1, \ldots, x_n) = \sum _{j=k+1}^{n} \jm _{i\leq n} x_i^2
$$
is convex and thus  Theorem~\ref{MPth} yields
$$
   \E \sum _{j=1}^{k}   \jm _{i\leq n} X_i^2  \leq
   \E \sum _{j=1}^{k}   \jm _{i\leq n} (b_i g_i^2),
$$
where $g_1, \ldots, g_n$ are i.i.d.\ standard Gaussian variables.
Theorem~\ref{comparison} completes the proof.
\kkk

\section{Efficiency of the nonlinear approximation}\label{s: nonlinear efficiency}

In this section, we briefly discuss the following question: How efficient is the nonlinear approximation
based on the largest projections, compared to the linear approximation with respect to the same basis?
In what follows, we fix the dimension $n$.
Given a centered random vector $X$ with a well defined covariance matrix (that is, each component of $X$
has a bounded variance), denote by
$\mathcal E(X,m)$ the mean square error of the nonlinear approximation based on $m$
largest projections onto the standard basis vectors, i.e.\
\begin{equation*}\label{eq: nonlin mean sq error}
  \mathcal E(X,m):=\E\sum\limits_{j=1}^{n-m} \jm _{i\leq n}{X_{i}}^2,\quad m< n.
\end{equation*}
Further, we define corresponding error for the linear approximation as
\begin{equation*}\label{eq: lin mean sq error}
  \mathcal E_0(X,m):=\min\limits_{|J|=n-m}\E\sum\limits_{i\in J}{X_{i}}^2  = \sum\limits_{j=1}^{n-m}
  \jm _{i\leq n} \left(\E{X_{i}}^2\right),
\end{equation*}
where the minimum is taken over all subsets of $\{1,2,\dots,n\}$ of cardinality $n-m$.
Obviously, we have
\begin{equation}\label{eq: obvious lin vs nonlin}
\mathcal E(X,m)\leq \mathcal E_0(X,m)
\end{equation}
for all $m< n$. Moreover, if for a fixed $m$ we define a random Gaussian vector $\widetilde X$ with independent components
and
$$
  \E {\widetilde X_{i}}^2= \left\{
  \begin{array}{ll}
   1, \, \mbox{ if } \, i\leq m+1  \\
   0, \, \mbox{ if } \, i> m+1
   \end{array}
\right.
$$
for all $i\leq n$,
then it can be checked that $\mathcal E_0(\widetilde X,m)=1$
whereas $\mathcal E(\widetilde X,m)\approx m^{-2}$.
Thus, the nonlinear approximation can in some cases be significantly more efficient than the linear approximation
as long as the number of projections is the same. However, as we show below,
some kind of a {\it reverse} inequality for \eqref{eq: obvious lin vs nonlin}
is possible under quite general assumptions on the distribution, if we are allowed to slightly
increase the number of projections for the linear approximation:

\begin{pr}\label{lastprop}
Let $u>0$, $m< n/2$ and let $X$ be a centered random vector in $\R^n$ with a well defined covariance matrix such that
\begin{equation}\label{eq: wrd condition}
 u\E {X_{i}}^2
\leq
\int_{0}^\infty\max\Big(\PP\{{X_i}^2\geq \tau\}-\frac{1}{2},0\Big)\,d\tau,\quad i\leq n.
\end{equation}
Then we have
$$u\mathcal E_0(X,2m)\leq \mathcal E(X,m).$$
\end{pr}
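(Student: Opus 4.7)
The plan is to bound $\mathcal E(X,m)$ from below by replacing each $X_i^2$ with a Bernoulli-weighted version of its median, thereby reducing the proposition to a deterministic inequality among the medians to which the layer-cake formula applies twice.

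First, I would extract from~\eqref{eq: wrd condition} the clean consequence $M_i\geq 2u\,\E X_i^2$ for every $i$, where $M_i$ denotes the largest median of $X_i^2$. Since $\PP\{X_i^2\geq\tau\}$ is non-increasing in $\tau$, the positive part of the integrand on the right of~\eqref{eq: wrd condition} is supported on $[0,M_i]$ and is bounded there by $1/2$, so the integral is at most $M_i/2$ and the desired bound follows.

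Next, set $\chi_i := \chi_{\{X_i^2\geq M_i\}}$ and observe the trivial pointwise domination $X_i^2\geq M_i\chi_i$ (both sides vanish on $\{X_i^2<M_i\}$; on the complement $X_i^2\geq M_i = M_i\chi_i$). Coordinate-wise monotonicity of order statistics yields $\sum_{j=1}^{n-m}\jm_{i\leq n}X_i^2 \geq \sum_{j=1}^{n-m}\jm_{i\leq n}(M_i\chi_i)$, and the advantage of the right-hand side is that $M_i\chi_i$ takes only the values $0$ and $M_i$, so its order statistics are easy to control in expectation. To do so, I would invoke the layer-cake identity $\sum_{j=1}^k\jm_{i\leq n}y_i = \int_0^\infty\bigl(|\{i:y_i\geq\tau\}|-(n-k)\bigr)^+\, d\tau$ (valid for nonnegative $y_i$) applied to $y_i = M_i\chi_i$ with $k = n-m$. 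Letting $K(\tau):=\{i:M_i\geq\tau\}$, for $\tau>0$ the event $\{M_i\chi_i\geq\tau\}$ equals $\{\chi_i=1\}\cap\{M_i\geq\tau\}$, so the corresponding counting random variable has expectation at least $|K(\tau)|/2$ because $\E\chi_i\geq 1/2$. Taking expectation, using Fubini, and applying Jensen's inequality to the convex function $x\mapsto(x-m)^+$ then gives
$$\E\sum_{j=1}^{n-m}\jm(M_i\chi_i) \,\geq\, \tfrac{1}{2}\int_0^\infty\bigl(|K(\tau)|-2m\bigr)^+\, d\tau.$$

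A second application of the layer-cake identity, now to the deterministic sequence $(M_i)$ with $k = n-2m$, identifies the last integral as $\sum_{j=1}^{n-2m}\jm_{i\leq n}M_i$. Combining with $M_i \geq 2u\,\E X_i^2$ from the first step and using coordinate-wise monotonicity of $\jm$ one more time gives $\sum_{j=1}^{n-2m}\jm M_i \geq 2u\,\mathcal E_0(X,2m)$, and chaining the inequalities produces $\mathcal E(X,m)\geq u\,\mathcal E_0(X,2m)$. The only conceptual hurdle I anticipate is in the setup: seeing that the opaque-looking integral in~\eqref{eq: wrd condition} is essentially a lower bound on the median, and that $X_i^2\geq M_i\chi_i$ allows one to replace the original order statistics with those of a much simpler sequence, after which two layer-cake reductions close the argument without further effort.
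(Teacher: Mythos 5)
Your proof is correct, but it follows a genuinely different route from the paper's. The paper works directly with the random index set $\mathcal I$ of the $m$ retained coordinates: it splits the indices into $I=\{i:\PP\{i\in\mathcal I\}\geq 1/2\}$ and its complement, observes that for $i\in I^c$ one has $\PP\{X_i^2\chi_{\{i\in\mathcal I^c\}}\geq\tau\}\geq \PP\{X_i^2\geq\tau\}-1/2$, feeds this straight into the hypothesis \eqref{eq: wrd condition} via the layer-cake formula to get $\mathcal E(X,m)\geq u\sum_{i\in I^c}\E X_i^2\geq u\,\mathcal E_0(X,|I|)$, and closes by the counting bound $|I|\leq 2\E|\mathcal I|=2m$. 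You instead first distill \eqref{eq: wrd condition} into the median bound $M_i\geq 2u\,\E X_i^2$ (correct: the positive part of the integrand is supported on $[0,M_i]$ and bounded by $1/2$, and by left-continuity of $\tau\mapsto\PP\{X_i^2\geq\tau\}$ the largest median satisfies $\PP\{X_i^2\geq M_i\}\geq 1/2$, so $\E\chi_i\geq 1/2$ as you need later), minorize $X_i^2$ by the two-valued variable $M_i\chi_i$, and run two layer-cake identities for sums of smallest order statistics with Jensen's inequality in between; the factor $2$ in the number of projections enters through $\E\chi_i\geq 1/2$ rather than through $\E|\mathcal I|=m$. All the steps check out (the identity $\sum_{j=1}^k\jm_{i\leq n}y_i=\int_0^\infty\bigl(|\{i:y_i\geq\tau\}|-(n-k)\bigr)^+\,d\tau$ is valid for nonnegative $y_i$, Tonelli and Jensen apply as claimed, and $m<n/2$ guarantees $n-2m\geq 1$). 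A small bonus of your version is that it establishes the proposition under the formally weaker hypothesis that the largest median of $X_i^2$ is at least $2u\,\E X_i^2$, which \eqref{eq: wrd condition} implies but which does not imply \eqref{eq: wrd condition}; the paper's version, by contrast, uses the integral hypothesis at full strength and never needs to introduce medians or the reduction to indicator-valued coordinates. The two arguments are of comparable length and difficulty.
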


Before proving the proposition, we would like to remark that condition \eqref{eq: wrd condition}
is invariant with respect to scalar multiplication of $X_i$'s.  Note also that it
is satisfied, in particular, for any centered Gaussian random vector with
 $u=1/20$. Moreover, this condition holds with $u=\beta^2/(48\alpha ^2)$
for random variables satisfying the $\abc$-condition. Indeed, for such a variable $\xi$,
denoting by $M$ the median of $\xi^2$, we have
by the $\beta$-condition
$$
  \E \xi^2 = 2\int_0^{\infty} t\PP\left\{|\xi|> t\right\}\, dt\leq
  2\int_0^{\infty} t\exp\left(-\beta t\right)\, dt = \frac{2}{\beta ^2}
$$
and, by the $\alpha$-condition, $M \geq 1/(4\alpha^2)$ and
$$
   \int_0^{M} \PP\left\{\xi^2\geq t\right\}\, dt \geq
  \int_0^{ 1/(4\alpha^2)} \left(1-\alpha \sqrt{t}\right)\, dt +
  \int_{ 1/(4\alpha^2)}^M \frac{1}{2}\, dt =  \frac{1}{24 \alpha^2} + \frac{M}{2},
$$
which implies
$$
  \int_{0}^\infty\max\Big(\PP\{{\xi}^2\geq t\}-\frac{1}{2},0\Big)\,dt =
   \int_0^{M} \PP\left\{\xi^2\geq t\right\}\, dt - \frac{M}{2} \geq \frac{1}{24 \alpha^2}
  \geq \frac{\beta ^2}{48 \alpha^2} \E \xi^2.
$$

\begin{proof}[Proof of Proposition~\ref{lastprop}]
Let $\mathcal I$ be a random subset of $\{1,2,\dots,n\}$ such that
$|\mathcal I|=m$ and
$$\sum\limits_{j=1}^{n-m} \jm _{i\leq n}{X_{i}}^2=\sum\limits_{i\in \mathcal I^c} {X_{i}}^2$$
everywhere on the probability space.
Now, let us distinguish two types of components of $X$: we set
$$I:=\big\{i\leq n:\, \PP\{i\in\mathcal I\}\geq 1/2\big\},\;\mbox{ so that }I^c=\big\{i\leq n:\, \PP\{i\in\mathcal I\}< 1/2\big\}.$$
Obviously, we have
$$\mathcal E(X,m)=\E\sum\limits_{i\in \mathcal I^c} {X_{i}}^2
=\sum\limits_{i=1}^n \E\big({X_{i}}^2\chi_{\{i\in \mathcal I^c\}}\big)
\geq \sum\limits_{i\in I^c} \E\big({X_{i}}^2\chi_{\{i\in \mathcal I^c\}}\big).
$$
Next, observe that for every $\tau> 0$ and every $i\in I^c$,
$$\PP\big\{{X_{i}}^2\chi_{\{i\in \mathcal I^c\}}\geq \tau\big\}
=\PP\big\{{X_{i}}^2\geq \tau\big\}-\PP\big\{{X_{i}}^2\geq\tau\mbox{ and }i\in\mathcal I\big\}\geq
\PP\big\{{X_{i}}^2\geq \tau\big\}-\frac{1}{2}.$$
Hence, in view of condition \eqref{eq: wrd condition},
$$\mathcal E(X,m)
\geq \sum\limits_{i\in I^c}\int_{0}^\infty\max\Big(\PP\{{X_i}^2\geq \tau\}-\frac{1}{2},0\Big)\,d\tau
\geq u\sum\limits_{i\in I^c}\E{X_{i}}^2\geq u\mathcal E_0(X,|I|).$$
On the other hand,
$$
  2m = 2\E |\mathcal I| = 2 \E \sum_{i=1}^{n}\chi_{\{i\in \mathcal I\}}  =
  2 \sum_{i=1}^{n}\PP{\{i\in \mathcal I\}} \geq |I|,
$$
and the proof is complete.
\end{proof}

\begin{rem}
In Proposition~\ref{lastprop} we assumed that $m$ is small compared to $n$,
which is a natural condition in context of signal approximation.
For theoretical reasons, it may be interesting to consider the range $m>n/2$.
One could ask the following question:
Let $m>n/2$ and $k:=n-m$. Does there exist an
absolute constant $C>0$ (not depending on $k$, $n$) such that
$\mathcal E_0(X,n-k/2)\leq C \, \mathcal E(X,n-k)$?
It turns out that this is not true even in the case of the
standard Gaussian random vector. Indeed, a direct computation
shows that $\mathcal E_0(X,n-k/2) = k/2$ (for even $k$), while
$\mathcal E(X,n-k) \approx k^3/n^2$. Thus, the above inequality
cannot be true with an absolute constant for $1\leq k\ll n$.
\end{rem}

\begin{rem}
Note that we were able to obtain a reverse-type inequality for \eqref{eq: obvious lin vs nonlin}
when we agreed to increase the number of one-dimensional
projections for the linear approximation, which could be viewed as increasing of the rank (the dimension)
of the corresponding projection. The idea to slightly lose on the optimality of dimension in order to
gain on other parameters was effectively employed in the study of geometry of high-dimensional
convex bodies (see e.g. \cite{LT, LPT} and references therein).
\end{rem}

\section{Proofs of auxiliary results.}
\label{aux-res-pr}

In this section we provide the proofs of results from Sections~\ref{secone} and \ref{min-bounds} for the sake of
completeness.

\begin{proof}[Proof of Lemma~\ref{aboutmin}]
{\it Case 1: $m=1$,} so that $a_1 \leq b_1/k$. Let $b:=b_1$, $n_0:=0$ and, given any $1\leq \ell \leq k$,
let $n_\ell$ be the largest integer not greater than $n$ such that
$$
\sum_{i=1}^{n_\ell} a_{i} \leq \frac{\ell b}{k} .
$$
Since $b/k \geq a_1 \geq a_2 \geq \ldots \geq a_n$, we have
$0=n_0 < 1 \leq n_1 <n_2< \ldots < n_k =n$.
Define a partition  $(A_\ell)_{\ell \leq k}$ of $\{1, 2, \dots, n\}$ as
$
A_\ell:= \left\{ i\,:\, n_{\ell-1} < i \leq {n_\ell} \right\}.
$
If $a_i\leq \frac{b}{2 k}$ for all $i$ then we set $t=0$. Otherwise,
let $t$ be the largest number in $\{1,2,\dots,n\}$ such that $a_t>\frac{b}{2 k}$.
Then
\begin{itemize}
\item[{[i]}]
for every $1\leq\ell\leq k$ such that $n_\ell \leq t$ we have
$
   \sum_{i\in A_\ell} a_{i} \geq a_{n_\ell} > \frac{b}{2 k} ;
$
\item[{[ii]}]
for every $\ell<k$ such that $n_\ell > t$ we have
$
   \sum_{i\in A_\ell} a_{i} \geq \frac{b}{2 k}
$
(otherwise, since $a_{n_\ell +1} \leq \frac{b}{2 k}$, we would have
$$
    \sum_{i=1}^{n_\ell +1} a_{i} = \sum_{i=1}^{n_{\ell-1}} a_{i} +
    \sum_{i\in A_\ell} a_{i} + a_{n_\ell +1} < \frac{(\ell-1)b}{k} +
    \frac{b}{2 k} + \frac{b}{2 k} = \frac{\ell b}{k},
$$
which contradicts  the choice of $n_\ell$);
\item[{[iii]}] for $\ell=k$ we have
$\sum_{i\in A_k} a_{i} =  \sum_{i=1}^{n} a_{i} -\sum_{i=1}^{n_{k-1}}a_{i}  \geq \frac{b}{k}.
$
\end{itemize}
This completes the proof of the case $m=1$.

\smallskip

\noindent
{\it Case 2: $m>1$.}
For $1\leq \ell<m$, choose $A_\ell=\{\ell\}$, and
let $(A_\ell)_{\ell=m}^k$ be the partition of $\{m, m+1, \dots, n\}$
into $k+1-m$ sets constructed in the same way as in Case 1.
Then, by the above argument, for every $\ell\geq m$ we have
$$
\sum_{i\in A_\ell} a_{i} \geq \frac{b_m}{2(k+1-m)},
$$
and the proof is complete.
\end{proof}

\begin{proof}[Proof of Lemma~\ref{lemmin-lower}]
Denote by $A_{k}(t)$ the event $\{|x_{k} \xi_{k}|> t\} =
\{ | \xi_{k}|> t/x_k \} $ and let
$$
A(t):= \{ \min_{k\leq n}|x_{k} \xi_{k}| >t\} = \bigcap _{k\leq n} A_k (t),\quad t>0.
$$
By \eqref{gdistone}, we have
$
\PP\left( A_{k}(t)^{c} \right) \leq \alpha \, t/ x_{k}.
$
Hence,
$$
\PP \left( A(t) \right) \geq 1-\sum_{k=1}^{n} \PP\left( A_{k}(t)^{c} \right) \geq 1- \alpha \,  t \,
  \sum_{k=1}^{n}\ 1/x_{k} = 1- \alpha b\, t,
$$
which proves the first estimate and implies the estimate for the median.
The estimate for the expectation follows by the distribution formula:
$$
 \mathbb E\min_{1\leq i\leq n}|x_{i} \xi_{i}|^p
 = \int_{0}^{\infty}  \PP   \big\{  \min_{1\leq i\leq n} |x_i \xi_{i}| >
  t^{1/p} \big\} dt \geq \int_{0}^{(\alpha b)^{-p}} \left( 1-\alpha b t^{1/p} \right) dt
  = \frac{(\alpha\, b)^{-p}}{1+p}.
$$
\end{proof}

\medskip

\begin{proof}[Proof of Lemma~\ref{lemmin-upper}]
As in the last proof, denote $A_{k}(t):=\{ |x_{k} \xi_{k}|> t\} =
\{| \xi_{k}|> t/x_k \}$ and let $A(t)$ be the intersection of the events. By  \eqref{gdisttwo},
we have
$
\PP\left( A_{k}(t) \right) \leq \exp\left( - \beta t/x_{k}\right).
$
Therefore,
$$
\PP \left( A(t) \right) = \prod_{k=1}^{n} \PP
\left( A_k(t) \right)    \leq   \exp\Big( - \beta \,
\sum_{k=1}^{n} \ t/x_{k} \Big) = \exp\left( - \beta b \, t \right),
$$
which proves the first estimate and implies the estimate for the median.
Again, the bound for the expectation follows by the distribution formula:
$$
 \mathbb E\min_{1\leq i\leq n}|x_{i} \xi_{i}|^p = \int_{0}^{\infty}
 \PP\big\{  \min_{1\leq i\leq n} |x_i \xi_{i} | >
 t^{1/p} \big\}dt \geq \int_{0}^{\infty} \exp\left( - \beta \, b \, t^{1/p} \right)  dt
 = \left( \beta \, b\right)^{-p}  \, p \, \Gamma(p).
$$
\end{proof}

\begin{proof}[Proof of Lemma~\ref{forthtwo}]
Denote $B(t):= \PP\left\{\km_{1\leq i\leq n}|x_{i} \xi_{i}|\leq
 t \right\}$. Clearly, we have
\begin{eqnarray*}
B(t)
&=& \PP\Big\{\exists i_{1},i_2\dots,i_{k}
\leq n :\, | \xi_{i_{j}}|\leq\frac{ t }{x_{i_{j}}}\mbox{ for all }j\leq k
\Big\}   \\
&=& \PP\Big(\bigcup_{\ell=k}^{n}\bigcup_{A\subset \{ 1,\dots,n \}
\atop |A|=\ell} \Big\{| \xi_{i}| \leq
\frac{ t }{x_{i}} \mbox{ for all $i\in A$ $\;$ and }\;| \xi_{i}|>\frac{ t
}{x_{i}}\mbox{ for all $i\notin A$}\Big\}\Big)   \\
&=&
\sum_{\ell=k}^{n} \ \sum_{A\subset \{ 1, \dots, n\} \atop |A|=\ell}\
 \prod_{i \in A}
\PP \Big\{  | \xi_{i}| \leq
\frac{ t }{x_{i}} \Big\} \,
\prod_{i \notin A}
\PP \Big\{|\xi_{i}|>\frac{ t
}{x_{i}}\Big\} .
\end{eqnarray*}
Hence,
$$
B(t) \leq
\sum_{\ell=k}^{n} \sum_{A\subset \{ 1, \dots, n\} \atop |A|=\ell}\
 \prod_{i \in A} \PP \Big\{  | \xi_{i}| \leq
\frac{ t }{x_{i}} \Big\} \,  \leq
\sum_{\ell=k}^{n} \ \sum_{A\subset \{ 1, \dots, n\} \atop |A|=\ell}\
\prod_{i \in A}
\ \frac{\alpha  t }{x_i} .
$$
Corollary~\ref{agmean} implies the first part of the lemma.

Next, we verify the bound for the expectation. The case $k=1$ follows by
Lemma~\ref{lemmin-lower}, so we assume that $k\geq 2$.
Let us start with establishing the bound
\begin{equation}\label{lowgen}
\frac{ k}{2^{1/p}\, 4 \alpha} \,
\left( \sum_{i=1}^n 1/x_i \right) ^{-1}
\leq
\left( \mathbb E \, \,
\km_{1\leq i\leq n} |x_{i}\xi_{i}| ^p \right)^{1/p},\quad k\geq 2.
\end{equation}
Set $\gamma := e/4$. Then
\begin{align*}
\mathbb E\,\km_{1\leq i\leq n}|x_{i} \xi_{i}|^p
&= \int_{0}^{\infty}\PP\big\{  \km_{1\leq i\leq n} |x_i \xi_{i}| >  t^{1/p} \big\} dt\\
&\geq \int_{0}^{(\gamma/a)^{p}} \left( 1-\frac{1}{\sqrt{2 \pi k}} \, \, \frac{a^k t^{k/p}}{1-a t^{1/p}}\right) dt \\
&\geq \left(\frac{\gamma}{a}\right)^{p} - \frac{1}{\sqrt{2\pi k}}\,  \frac{a^k}{1-\gamma}\,\left(\frac{\gamma}{a}\right)^{k+p}\\
&\geq \left(\frac{\gamma}{a}\right)^{p}\left(1 - \frac{1}{2 \sqrt{\pi}}\, \frac{\gamma^2}{1-\gamma} \right)\\
&\geq \frac{1}{2}\, \left(\frac{\gamma}{a}\right)^{p},
\end{align*}
which proves \eqref{lowgen}.
Finally, observe that
for every sequence
$(a_i)_{i=1}^n$ and every $r<k$ one has
$$
  \km (a_i)_{i=1}^n \geq  \krm (a_i)_{i=r+1}^n;
$$
in particular,
$$\left( \mathbb E \, \,
\km_{1\leq i\leq n} |x_{i}\xi_{i}| ^p \right)^{1/p}
\geq \left( \mathbb E \, \,
\krm_{r+1\leq i\leq n} |x_{i}\xi_{i}| ^p \right)^{1/p}
\geq \frac{k-r}{2^{1/p}\, 4 \alpha} \,
\left( \sum_{i=r+1}^n 1/x_i \right) ^{-1},
$$
where the last inequality is \eqref{lowgen} applied to the appropriate ``truncated'' sequence.
The result follows.
\end{proof}

\bigskip

\noindent
{\bf Acknowledgment. } The authors would like to thank Nicole Tomczak-Jaegermann for valuable comments on
Section~\ref{s: nonlinear efficiency}. They are also grateful to anonymous referees for helpful remarks and
suggestions.

\smallskip

\noindent
A. E. Litvak and K. Tikhomirov\\  {\small Dept.\ of Math.\ and Stat.\ Sciences},\\
{\small University of Alberta}, {\small Edmonton, AB, Canada T6G 2G1},\\
{\small \tt  aelitvak@gmail.com}\\
{\small \tt  ktikhomi@ualberta.ca}

\smallskip
\noindent {\small Current address of K.T.: Dept.\ of Math., Fine Hall, Princeton, NJ 08544}

\end{document}